\newtheorem{theorem}{Theorem}[section]
\newtheorem{definition}[theorem]{Definition}
\newtheorem{example}[theorem]{Example}
\newtheorem{lemma}[theorem]{Lemma}
\newtheorem{proposition}[theorem]{Proposition}
\numberwithin{equation}{section}
\begin{document}

\title{Solvability of Langevin equations with two Hadamard fractional derivatives via Mittag-Leffler functions}
\author{Mohamed I. Abbas\thanks{miabbas77@gmail.com} \\ Department of Mathematics and Computer Science,\\ 
Faculty of Science, Alexandria University, Alexandria 21511, Egypt\and Maria Alessandra Ragusa\thanks{mariaalessandra.ragusa@unict.it
}\\ Dipartimento di Matematica e Informatica, Università di Catania\\
      Viale Andrea Doria, 6, CATANIA - 95125, ITALY\\RUDN University, 6 Miklukho - Maklay St, Moscow, 117198, Russia}
\date{}

\maketitle
\makeatletter
\renewcommand\@makefnmark%
{\mbox{\textsuperscript{\normalfont\@thefnmark)}}}
\makeatother

\begin{abstract}
In this paper we discuss the solvability of Langevin equations with two Hadamard fractional derivatives. The method of  this discussion is to study the solutions of the equivalent Volterra integral equation in terms of Mittag-Leffler functions. The existence and uniqueness results are established by using  Schauder fixed point theorem and  Banach fixed point theorem respectively. An example is given to illustrate the main results.
\end{abstract}
\textbf{Mathematics Subject Classification:} 34A08, 34A12, 33E12.\\
\textbf{Keywords:} Langevin equation, Mittag-Leffler functions, Hadamard fractional derivative, Schauder fixed point theorem.

\section{Introduction}
In recent years, The fractional differential equations have elicited as a rich area of research due to their applications in various fields of science and engineering such as aerodynamics, viscoelasticity, control theory, economics and blood flow phenomena, etc. For more details, we refer the reader to the text books \cite{Kilbas,Lakshmikantham,Miller,Podlubny}.\\

Besides the well-known Riemann-Liouville and Caputo type fractional derivatives, there is an other fractional derivative introduced by Hadamard in 1892 \cite{Hadamard}, which is known as Hadamard derivative and differs from the preceding ones in the sense that the kernel of the integral in its definition contains logarithmic function of arbitrary exponent. A detailed description of Hadamard fractional derivative and integral can be found in \cite{Kilbas,Butzer1,Butzer2,Butzer3} and the references therein. For some recent results in Hadamard fractional differential equations,  we refer the reader to \cite{Abbas1,Song,Zhai} and the references therein.\\

The Langevin equation was able to attract the attention of researchers due to its importance in mathematical physics that is used in modeling the phenomena occurring in fluctuating environment such as Brownian motion \cite{Coffey}.\\

 In 1908, P. Langevin \cite{Langevin} derived the classical form of this equation in terms of ordinary derivatives. For the systems in complex media, classical Langevin equation does not provide the correct description of the dynamics. The generalized Langevin equation was introduced in 1966 by Kubo \cite{Kubo}, where a frictional memory kernel was incorporated into the Langevin equation to describe the fractal and memory properties. In the 1990s, Mainardi and collaborators \cite{Mainardi1,Mainardi2} introduced the fractional Langevin equation. Many interesting results regarding the existence, uniqueness and stability results  for fractional order Langevin equations have been studied by many researchers, see for example \cite{Bashir1,Bashir2,Berhail,A,B,Liu,C,D} and references cited therein.\\

As far as we know, there are no contributions associated with the solutions of the equivalent Volterra integral equations of the fractional  Langevin equations in terms of Mittag-leffler functions.\\

The objective of this paper is to investigate the existence and uniqueness of solutions for the following  Langevin equations with two Hadamard fractional derivatives:
\begin{equation}\label{main}
\begin{cases}
\prescript{H}{}D_{1,t}^{\beta}(\prescript{H}{}D_{1,t}^{\alpha}-\lambda)x(t)=f(t,x(t)),~~t\in[1,e],~\lambda>0,~0<\alpha,\beta\leq 1,\\
(\prescript{H}{}D_{1,t}^{\alpha}-\lambda) x(e)=0,~~~\prescript{H}{}I_{1^+}^{1-\alpha}x(1^+)=c_{0},~~c_{0}\in\mathbb{R},
\end{cases}
\end{equation}
where  $\prescript{H}{}D_{1,t}^{\alpha}~,\prescript{H}{}D_{1,t}^{\beta}$ denote Hadamard fractional derivatives of orders $\alpha,~\beta~(0<\alpha,\beta\leq 1)$ respectively , $\prescript{H}{}I_{1^+}^{1-\alpha}$ denotes the left-sided Hadamard fractional integral of order $1-\alpha$ and $f:[1,e]\times\mathbb{R}\to \mathbb{R}$ is given continuous function.

\section{Preliminaries}
Let $C([1,e],\mathbb{R})$ be the Banach space of all continuous functions from $[1,e]$ to $\mathbb{R}$ endowed with the norm
$\|x\|_{C}=\sup_{t\in[1,e]}|x(t)|$. \\
For $0\leq\gamma<1$, we define the weighted space of functions $g$ on $[1,e]$ by
$$C_{\gamma,\ln}([1,e],\mathbb{R})=\{g:[1,e]\to \mathbb{R}|~(\ln t)^{\gamma}g(t)\in C([1,e],\mathbb{R})\}.$$
Clearly, $C_{\gamma,\ln}([1,e],\mathbb{R})$ is the Banach space with the norm
$$\|g\|_{\gamma,\ln}=\|(\ln t)^{\gamma}g(t)\|_{C}=\sup_{t\in [1,e]}|(\ln t)^{\gamma}g(t)|.$$
The following definitions are devoted to the basic concepts of Hadamard fractional integrals and fractional derivatives.
\begin{definition}(see \textbf{\cite{Kilbas}}, p.110)
The left-sided Hadamard fractional integral of order $\alpha\in\mathbb{R}^{+}$ of function $g(t)$ is defined by
\[
\prescript{H}{}I_{1^+}^{\alpha}g(t)=\frac{1}{\Gamma(\alpha)}\int_{1}^{t}\left(\ln \frac{t}{s}\right)^{\alpha-1}g(s)\frac{ds}{s},~~0<1<t\leq b<\infty,
\]
where $\Gamma(\cdot)$ is the Gamma function.
\end{definition}
\begin{definition}(see \textbf{\cite{Kilbas}}, p.111)
The left-sided Hadamard fractional derivative of order $\alpha\in[n-1, n),~ n \in \mathbb{Z}^{+}$ of function $g(t)$ is defined by
\[
\prescript{H}{}D_{1^+}^{\alpha}g(t)=\frac{1}{\Gamma(n-\alpha)}\left(t\frac{d}{dt}\right)^{n}\int_{1}^{t}\left(\ln \frac{t}{s}\right)^{n-\alpha+1}g(s)\frac{ds}{s},~~0<1<t\leq b<\infty,
\]
where $\Gamma(\cdot)$ is the Gamma function.
\end{definition}
\begin{proposition}(see \textbf{\cite{Kilbas}}, p.114,115)
Let $\alpha>0,~\beta>0$ and $0<1<b<\infty$. Then for $f\in C_{\gamma,\ln}([1,e],\mathbb{R})$, the following properties hold:
$$\prescript{H}{}D_{1,t}^{\alpha}\prescript{H}{}I_{1^+}^{\beta}f=\prescript{H}{}I_{1^+}^{\beta-\alpha}f ~~~\textit{and}~~~ \prescript{H}{}D_{1,t}^{\alpha}\prescript{H}{}I_{1^+}^{\alpha}f=f.$$
\end{proposition}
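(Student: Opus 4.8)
The plan is to reduce both identities to two structural facts about Hadamard operators: the index (semigroup) law for the fractional integral, and the action of the operator $\delta = t\frac{d}{dt}$ on a fractional integral. Writing the Hadamard derivative as $\prescript{H}{}D_{1,t}^{\alpha} = \delta^{n}\,\prescript{H}{}I_{1^+}^{n-\alpha}$, where $n$ is the integer determined by $\alpha\in[n-1,n)$, the first identity collapses to a composition of two integrals followed by a differentiation.

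First I would establish the semigroup property $\prescript{H}{}I_{1^+}^{\alpha}\prescript{H}{}I_{1^+}^{\beta} f = \prescript{H}{}I_{1^+}^{\alpha+\beta} f$. Starting from the defining integral, I would write out the resulting double integral, apply Fubini's theorem (justified by the absolute integrability guaranteed by $f\in C_{\gamma,\ln}$), and perform the logarithmic substitution $u=\ln s$ that turns $\ln(t/s)$ into a difference. This converts the inner integral into the standard Beta-function integral $\int_0^1 \xi^{\beta-1}(1-\xi)^{\alpha-1}\,d\xi = B(\alpha,\beta) = \Gamma(\alpha)\Gamma(\beta)/\Gamma(\alpha+\beta)$, which reproduces precisely the kernel of $\prescript{H}{}I_{1^+}^{\alpha+\beta}$.

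Next I would record the differentiation rule $\delta\,\prescript{H}{}I_{1^+}^{\mu} f = \prescript{H}{}I_{1^+}^{\mu-1} f$ for $\mu>1$, obtained by differentiating under the integral sign (legitimate since the kernel $(\ln(t/s))^{\mu-1}$ is differentiable in $t$ for $\mu>1$ and $f$ is weighted-continuous), and iterate it to get $\delta^{n}\,\prescript{H}{}I_{1^+}^{n+\gamma} f = \prescript{H}{}I_{1^+}^{\gamma} f$. Combining the two facts yields
$$\prescript{H}{}D_{1,t}^{\alpha}\prescript{H}{}I_{1^+}^{\beta} f = \delta^{n}\,\prescript{H}{}I_{1^+}^{n-\alpha}\prescript{H}{}I_{1^+}^{\beta} f = \delta^{n}\,\prescript{H}{}I_{1^+}^{n-\alpha+\beta} f = \prescript{H}{}I_{1^+}^{\beta-\alpha} f,$$
and the second identity is the special case $\beta=\alpha$, in which $\prescript{H}{}I_{1^+}^{0}$ is the identity operator.

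The main obstacle I anticipate is the careful justification at the boundary regimes: when $\beta-\alpha$ is negative or zero the symbol $\prescript{H}{}I_{1^+}^{\beta-\alpha}$ must be read either as a derivative of lower order or as the identity, so I would need to verify that the differentiation-under-the-integral step and the interchange of $\delta^{n}$ with the integral remain valid precisely on the weighted space $C_{\gamma,\ln}$---that is, that every intermediate fractional integral exists and that the boundary terms produced by $\delta$ vanish. The Beta-integral computation itself is routine; the analytic bookkeeping ensuring each composition is well defined is where the real care is required.
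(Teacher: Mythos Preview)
The paper does not provide its own proof of this proposition; it is stated with a citation to \cite{Kilbas}, p.~114--115, and used as a black box. There is therefore nothing in the paper to compare your argument against. Your outline---semigroup law for $\prescript{H}{}I_{1^+}^{\alpha}$ via Fubini and the Beta integral, the reduction rule $\delta\,\prescript{H}{}I_{1^+}^{\mu}=\prescript{H}{}I_{1^+}^{\mu-1}$, and then composing---is the standard derivation one finds in the cited reference, and it is correct.
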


\begin{lemma}(see \textbf{\cite{Kilbas}}, Theorem 2.3, p.116)\label{lem:prop}
Let $\beta>0$ ,~$0<1<b<\infty$ and $f\in C_{\gamma,\ln}([1,e],\mathbb{R})$. Then

$$\prescript{H}{}I_{1^+}^{\beta}\prescript{H}{}D_{1,t}^{\beta}u(t)=u(t)-\sum_{j=1}^{n}c_{j}(\ln t)^{\beta-j},$$
where $c_{j}\in \mathbb{R}$ and $n-1<\beta<n$.
\end{lemma}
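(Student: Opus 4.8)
The plan is to deduce this Hadamard inversion formula from the classical Riemann--Liouville one by exploiting the logarithmic change of variable $x=\ln t$, under which $[1,e]$ is carried onto $[0,1]$, the differential operator $t\,\frac{d}{dt}$ becomes $\frac{d}{dx}$, and the Haar measure $\frac{ds}{s}$ becomes Lebesgue measure $du$. For a function $g$ on $[1,e]$ I set $\tilde g(x):=g(e^{x})$ on $[0,1]$; then $g\in C_{\gamma,\ln}([1,e],\mathbb R)$ precisely when $x^{\gamma}\tilde g(x)$ is continuous, so $g\mapsto\tilde g$ is an isometric isomorphism onto the standard weighted space on which the Riemann--Liouville theory applies.

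First I would establish the two conjugacy relations. Writing $u=\ln s$ in the defining integral of Definition~2.1 gives, for every $\alpha>0$,
\begin{equation*}
\bigl(\prescript{H}{}I_{1^+}^{\alpha}g\bigr)(e^{x})=\frac{1}{\Gamma(\alpha)}\int_{0}^{x}(x-u)^{\alpha-1}\tilde g(u)\,du=\bigl(I_{0^+}^{\alpha}\tilde g\bigr)(x),
\end{equation*}
where $I_{0^+}^{\alpha}$ denotes the usual left Riemann--Liouville integral. Since $\frac{d}{dx}=t\,\frac{d}{dt}$ along $t=e^{x}$, iterating $n$ times and combining with the previous identity yields, for $n-1<\beta<n$,
\begin{equation*}
\bigl(\prescript{H}{}D_{1,t}^{\beta}u\bigr)(e^{x})=\Bigl(\tfrac{d}{dx}\Bigr)^{n}\bigl(I_{0^+}^{n-\beta}\tilde u\bigr)(x)=\bigl(D_{0^+}^{\beta}\tilde u\bigr)(x),
\end{equation*}
so that the Hadamard derivative is conjugate to the Riemann--Liouville derivative $D_{0^+}^{\beta}$.

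With both operators transported to the Riemann--Liouville setting, the composition $\prescript{H}{}I_{1^+}^{\beta}\prescript{H}{}D_{1,t}^{\beta}u$ evaluated at $t=e^{x}$ equals $\bigl(I_{0^+}^{\beta}D_{0^+}^{\beta}\tilde u\bigr)(x)$, and I would invoke the classical inversion identity
\begin{equation*}
\bigl(I_{0^+}^{\beta}D_{0^+}^{\beta}\tilde u\bigr)(x)=\tilde u(x)-\sum_{j=1}^{n}\frac{\bigl(D_{0^+}^{\beta-j}\tilde u\bigr)(0^{+})}{\Gamma(\beta-j+1)}\,x^{\beta-j}.
\end{equation*}
Substituting back $x=\ln t$ and $\tilde u(\ln t)=u(t)$ turns $x^{\beta-j}$ into $(\ln t)^{\beta-j}$ and reproduces exactly the asserted formula, with the constants identified as $c_{j}=\bigl(\prescript{H}{}D_{1,t}^{\beta-j}u\bigr)(1^{+})/\Gamma(\beta-j+1)$; since the statement only claims $c_{j}\in\mathbb R$, recording their existence suffices.

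The main obstacle is the rigorous justification of the derivative conjugacy: one must verify that the $n$ applications of $t\,\frac{d}{dt}$ pass through the fractional integral and reproduce $n$ applications of $\frac{d}{dx}$, and that the resulting boundary values $\bigl(D_{0^+}^{\beta-j}\tilde u\bigr)(0^{+})$ are finite. This is where the hypothesis $u\in C_{\gamma,\ln}$ is essential: it guarantees integrability of the kernels near the endpoint $t=1$ and controls the admissible singularity $(\ln t)^{\beta-j}$, so that the differentiations under the integral sign and the limits defining the $c_{j}$ are legitimate. A purely intrinsic alternative would replace this transport argument by a Hadamard integration by parts, but that route is computationally heavier and offers no conceptual gain.
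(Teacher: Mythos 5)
The paper never proves this lemma at all: it is quoted (with typographical slips) from \cite{Kilbas}, Theorem~2.3, p.~116, so there is no internal argument to compare yours against --- your write-up supplies a proof where the paper only gives a citation. That said, your route is the standard one and is correct in outline: under $x=\ln t$ the Hadamard integral and derivative are conjugate to the Riemann--Liouville operators $I_{0^+}^{\beta}$ and $D_{0^+}^{\beta}$ (your derivative computation implicitly uses the correct kernel exponent $n-\beta-1$, silently fixing the misprint $n-\alpha+1$ in the paper's Definition of the Hadamard derivative), the classical inversion identity applies on $[0,1]$, and transporting back gives exactly the stated formula with $c_{j}=\bigl(\prescript{H}{}D_{1,t}^{\beta-j}u\bigr)(1^{+})/\Gamma(\beta-j+1)$, which for $j=n$ is the value of a fractional \emph{integral} of order $n-\beta$ at $1^{+}$, consistent with the initial condition used in the paper.

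The one point you should not gloss over is your closing claim that the hypothesis $u\in C_{\gamma,\ln}$ by itself ``guarantees'' the legitimacy of the Riemann--Liouville inversion step. It does not: that identity requires that $D_{0^+}^{\beta}\tilde u$ exist --- equivalently, that $I_{0^+}^{n-\beta}\tilde u$ have an absolutely continuous derivative of order $n-1$ --- and weighted continuity of $\tilde u$ alone does not ensure the existence of any fractional derivative of $\tilde u$. The formula is really valid only for those $u$ whose Hadamard derivative $\prescript{H}{}D_{1,t}^{\beta}u$ exists in a suitable space, which is precisely how it is used in Lemma~\ref{lem:aux}, where that derivative is prescribed to equal $\sigma\in C_{\gamma,\ln}$. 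This looseness is inherited from the paper's own statement (which even places the hypothesis on $f$ while the conclusion concerns $u$), but in a self-contained proof you should state the extra existence assumption explicitly rather than assert that it follows from membership in $C_{\gamma,\ln}$.
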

\subsection{Auxiliary Lemma}
\begin{lemma}\label{lem:aux}
Given $\sigma\in C_{\gamma,\ln}([1,e],\mathbb{R})$. Then the linear problem
\begin{equation}\label{linear}
\begin{cases}
\prescript{H}{}D_{1,t}^{\beta}(\prescript{H}{}D_{1,t}^{\alpha}-\lambda)x(t)=\sigma(t),~~1<t<e,\\
(\prescript{H}{}D_{1,t}^{\alpha}-\lambda) x(e)=0,~~~\prescript{H}{}I_{1^+}^{1-\alpha}x(1^+)=c_{0},~~c_{0}\in\mathbb{R},
\end{cases}
\end{equation}
is equivalent to the integral equation
\begin{equation}\label{solnE}
\begin{split}
x(t)&=c_{0}(\ln t)^{\alpha-1}\mathbb{E}_{\alpha,\alpha}(\lambda(\ln t)^{\alpha})
+\int_{1}^{t}\left(\ln \frac{t}{s}\right)^{\alpha-1}\mathbb{E}_{\alpha,\alpha}\left(\lambda\left(\ln \frac{t}{s}\right)^{\alpha}\right)\\
&\times \left[\frac{1}{\Gamma(\beta)}\int_{1}^{s}\left(\ln \frac{s}{\tau}\right)^{\beta-1}\sigma(\tau)\frac{d\tau}{\tau}-\frac{(\ln s)^{\beta-1}}{\Gamma(\beta)}\int_{1}^{e}\left(1-\ln\tau\right)^{\beta-1}\sigma(\tau)\frac{d\tau}{\tau}\right]\frac{ds}{s}
\end{split}
\end{equation}
\end{lemma}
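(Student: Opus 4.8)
The plan is to decouple the two Hadamard derivatives by introducing a single auxiliary unknown, solve the two resulting single-order problems in succession, and then recognise the Mittag-Leffler kernel as the sum of the Neumann series generated by the resolvent of $\prescript{H}{}I_{1^+}^{\alpha}$. Throughout I would rely only on Lemma~\ref{lem:prop}, the semigroup law for Hadamard integrals, and the elementary identity $\prescript{H}{}I_{1^+}^{\mu}(\ln t)^{\nu}=\frac{\Gamma(\nu+1)}{\Gamma(\nu+\mu+1)}(\ln t)^{\nu+\mu}$.

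First I would set $y(t)=(\prescript{H}{}D_{1,t}^{\alpha}-\lambda)x(t)$, so that the first line of \eqref{linear} becomes $\prescript{H}{}D_{1,t}^{\beta}y(t)=\sigma(t)$. Applying $\prescript{H}{}I_{1^+}^{\beta}$ and invoking Lemma~\ref{lem:prop} with $n=1$ yields
$$y(t)=\frac{1}{\Gamma(\beta)}\int_{1}^{t}\left(\ln\frac{t}{s}\right)^{\beta-1}\sigma(s)\frac{ds}{s}+c_{1}(\ln t)^{\beta-1}$$
for some $c_{1}\in\mathbb{R}$. The boundary condition $(\prescript{H}{}D_{1,t}^{\alpha}-\lambda)x(e)=y(e)=0$, combined with $\ln(e/s)=1-\ln s$, determines $c_{1}=-\frac{1}{\Gamma(\beta)}\int_{1}^{e}(1-\ln s)^{\beta-1}\sigma(s)\frac{ds}{s}$, which is exactly the bracketed factor appearing in \eqref{solnE}.

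Next I would solve the remaining equation $\prescript{H}{}D_{1,t}^{\alpha}x(t)-\lambda x(t)=y(t)$. Applying $\prescript{H}{}I_{1^+}^{\alpha}$ and Lemma~\ref{lem:prop} once more converts it into the Volterra equation $x(t)=\frac{c_{0}}{\Gamma(\alpha)}(\ln t)^{\alpha-1}+\lambda\,\prescript{H}{}I_{1^+}^{\alpha}x(t)+\prescript{H}{}I_{1^+}^{\alpha}y(t)$, where the coefficient of $(\ln t)^{\alpha-1}$ is fixed by the initial condition $\prescript{H}{}I_{1^+}^{1-\alpha}x(1^+)=c_{0}$ through the identity $\prescript{H}{}I_{1^+}^{1-\alpha}(\ln t)^{\alpha-1}=\Gamma(\alpha)$. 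Solving this by successive approximation and using the semigroup law $(\prescript{H}{}I_{1^+}^{\alpha})^{k}=\prescript{H}{}I_{1^+}^{k\alpha}$ together with $\prescript{H}{}I_{1^+}^{k\alpha}(\ln t)^{\alpha-1}=\frac{\Gamma(\alpha)}{\Gamma((k+1)\alpha)}(\ln t)^{(k+1)\alpha-1}$ produces the Neumann series
$$x(t)=\frac{c_{0}}{\Gamma(\alpha)}\sum_{k=0}^{\infty}\lambda^{k}\prescript{H}{}I_{1^+}^{k\alpha}(\ln t)^{\alpha-1}+\sum_{k=0}^{\infty}\lambda^{k}\prescript{H}{}I_{1^+}^{(k+1)\alpha}y(t).$$
Since $\mathbb{E}_{\alpha,\alpha}(z)=\sum_{k\geq0}z^{k}/\Gamma((k+1)\alpha)$, the first series collapses to $c_{0}(\ln t)^{\alpha-1}\mathbb{E}_{\alpha,\alpha}(\lambda(\ln t)^{\alpha})$, while interchanging summation and integration in the second gives the convolution of $y$ against the kernel $\left(\ln\frac{t}{s}\right)^{\alpha-1}\mathbb{E}_{\alpha,\alpha}\!\left(\lambda\left(\ln\frac{t}{s}\right)^{\alpha}\right)$, which is precisely \eqref{solnE} once $y$ is replaced by the bracket from the previous step.

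The hard part will be the final step: justifying convergence of the Neumann iteration and the exchange of summation with integration. Since $\ln t\leq 1$ on $[1,e]$, each general term is dominated by $\lambda^{k}/\Gamma((k+1)\alpha)$, whose sum is the finite value $\mathbb{E}_{\alpha,\alpha}(\lambda)$; this gives uniform convergence on $[1,e]$, legitimises the term-by-term integration, and simultaneously shows $x\in C_{1-\alpha,\ln}([1,e],\mathbb{R})$. For the reverse implication I would start from \eqref{solnE}, apply $\prescript{H}{}D_{1,t}^{\alpha}-\lambda$ to recover $y(t)$, then apply $\prescript{H}{}D_{1,t}^{\beta}$ to recover $\sigma(t)$, and verify the two boundary conditions directly using $\prescript{H}{}I_{1^+}^{1-\alpha}(\ln t)^{\alpha-1}=\Gamma(\alpha)$ and the normalisation $y(e)=0$.
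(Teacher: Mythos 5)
Your proof is correct, and its first half coincides exactly with the paper's own argument: both you and the authors set $y=(\prescript{H}{}D_{1,t}^{\alpha}-\lambda)x$, apply $\prescript{H}{}I_{1^+}^{\beta}$ through Lemma \ref{lem:prop}, and determine $c_{1}$ from the terminal condition $y(e)=0$, arriving at the same intermediate equation (\ref{Esolution}). The genuine difference is in the second half. The paper disposes of the remaining problem $(\prescript{H}{}D_{1,t}^{\alpha}-\lambda)x=y$, $\prescript{H}{}I_{1^+}^{1-\alpha}x(1^+)=c_{0}$, by simply citing the solution formulas (4.1.89)--(4.1.95) of Kilbas et al., whereas you re-derive that result from scratch: you convert it to the Volterra equation $x=\tfrac{c_{0}}{\Gamma(\alpha)}(\ln t)^{\alpha-1}+\lambda\,\prescript{H}{}I_{1^+}^{\alpha}x+\prescript{H}{}I_{1^+}^{\alpha}y$, solve by Neumann series using the semigroup law and the power rule for Hadamard integrals, and resum into the Mittag-Leffler kernel, with the Weierstrass test (valid because $\ln t\leq 1$ on $[1,e]$ and $\mathbb{E}_{\alpha,\alpha}(\lambda)<\infty$) justifying convergence and the interchange of summation and integration. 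Your route is self-contained and makes transparent where the kernel $\left(\ln \frac{t}{s}\right)^{\alpha-1}\mathbb{E}_{\alpha,\alpha}\left(\lambda\left(\ln \frac{t}{s}\right)^{\alpha}\right)$ comes from --- in effect you reproduce, inside the proof, the successive-approximation argument that the cited pages of Kilbas's book themselves use --- at the cost of extra length; the paper's version is shorter but leaves the key Mittag-Leffler step as a black box. Both treatments dispatch the converse direction identically (by direct verification), and both leave the same minor points implicit, e.g.\ that $\lambda\prescript{H}{}I_{1^+}^{1}x(t)+\prescript{H}{}I_{1^+}^{1}y(t)\to 0$ as $t\to 1^+$ when the coefficient $c_{0}/\Gamma(\alpha)$ is fixed, and that the Volterra equation has no other solutions (which your iteration argument settles since the remainder $\lambda^{n}\prescript{H}{}I_{1^+}^{n\alpha}x$ tends to zero).
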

\begin{proof}
Firstly, we apply the Hadamard fractional integral of order $\beta$ to both sides of (\ref{linear}) and using the result of Lemma \ref{lem:prop}, we get
\begin{equation}\label{c1}
(\prescript{H}{}D_{1,t}^{\alpha}-\lambda)x(t)=\prescript{H}{}I_{1^+}^{\beta}\sigma(t)+c_{1}(\ln t)^{\beta-1},
\end{equation}
where $c_{1}\in \mathbb{R}$. Using the boundary condition $(\prescript{H}{}D_{1,t}^{\alpha}-\lambda) x(e)=0$, we get
$$c_{1}=-\frac{1}{\Gamma(\beta)}\int_{1}^{e}\left(1-\ln s\right)^{\beta-1}\sigma(s)\frac{ds}{s}.$$
Then we can rewrite equation (\ref{c1}) in the form
\begin{equation}\label{Esolution}
(\prescript{H}{}D_{1,t}^{\alpha}-\lambda)x(t)=\frac{1}{\Gamma(\beta)}\int_{1}^{t}\left(\ln \frac{t}{s}\right)^{\beta-1}\sigma(s)\frac{ds}{s}-\frac{(\ln t)^{\beta-1}}{\Gamma(\beta)}\int_{1}^{e}\left(1-\ln s\right)^{\beta-1}\sigma(s)\frac{ds}{s}.
\end{equation}
Now, by (\cite{Kilbas}, p. 234, (4.1.89)–(4.1.95)), we conclude that equation (\ref{Esolution}) with the boundary condition $\prescript{H}{}I_{1^+}^{1-\alpha}x(1^+)=c_{0}$ has a solution $x\in C_{\gamma,\ln}([1,e],\mathbb{R})$ that satisfies the volterra integral equation (\ref{solnE}), which yields the required solution. By direct computation we can prove the converse. This ends the proof.
\end{proof}

\begin{lemma}(\cite{JinRong})\label{Ronglemma}
Let $\alpha,\gamma\in(0,1),~1<t_1<t_2\leq e$ and $\alpha-1+\frac{1}{q}>0,~\gamma<\frac{1}{p}$. Then
$$\int_{1}^{t_2}(\ln s)^{-\gamma}\bigg(\left(\ln \frac{t_2}{s}\right)^{\alpha-1}-\left(\ln \frac{t_1}{s}\right)^{\alpha-1}\bigg)\frac{ds}{s}\leq\left(\frac{\ln (t_2)^{1-p\gamma}}{1-p\gamma}\right)^{\frac{1}{p}}\left[\frac{(\ln t_2)^{q(\alpha-1)+1}-(\ln t_1)^{q(\alpha-1)+1}+\left(\ln \frac{t_1}{t_2}\right)^{q(\alpha-1)+1}}{q(\alpha-1)+1}\right],$$
where $\frac{1}{p}+\frac{1}{q}=1,~p,q>1.$
\end{lemma}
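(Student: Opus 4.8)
The plan is to treat the integrand as the product of the weight $(\ln s)^{-\gamma}$ with the difference of the two logarithmic kernels and to estimate it by Hölder's inequality with the conjugate exponents $p,q$. The first move I would make is the substitution $u=\ln s$, which sends $\frac{ds}{s}$ to $du$ and converts every Hadamard-type factor into an elementary power: writing $a=\ln t_1$ and $b=\ln t_2$ with $0<a<b\le 1$, one has $\ln\frac{t_2}{s}=b-u$, $\ln\frac{t_1}{s}=a-u$ and $(\ln s)^{-\gamma}=u^{-\gamma}$. Because $a-u<0$ once $u>a$, I would fix the interpretation of the kernel difference at the outset: on $[0,a]$ it is the genuinely nonnegative quantity $(a-u)^{\alpha-1}-(b-u)^{\alpha-1}$ (nonnegative since $\alpha-1<0$ makes $x\mapsto x^{\alpha-1}$ decreasing while $a-u\le b-u$), whereas on $(a,b]$ only the single term $(b-u)^{\alpha-1}$ survives. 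The negative power $\big(\ln\frac{t_1}{t_2}\big)^{q(\alpha-1)+1}$ appearing on the right must likewise be read as $\big(\ln\frac{t_2}{t_1}\big)^{q(\alpha-1)+1}=(b-a)^{q(\alpha-1)+1}$, which is what the computation below will produce.

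With this normalisation, Hölder's inequality bounds the left-hand side by
\[
\left(\int_{0}^{b}u^{-p\gamma}\,du\right)^{\frac1p}\left(\int_{0}^{b}\big|D(u)\big|^{q}\,du\right)^{\frac1q},
\]
where $D$ denotes the kernel just described. The first factor I can evaluate in closed form, $\int_{0}^{b}u^{-p\gamma}\,du=\frac{b^{1-p\gamma}}{1-p\gamma}$, the integral converging precisely because the hypothesis $\gamma<\frac1p$ gives $p\gamma<1$; substituting back $b=\ln t_2$ reproduces the factor $\Big(\frac{(\ln t_2)^{1-p\gamma}}{1-p\gamma}\Big)^{1/p}$ on the right.

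For the second factor I would split $\int_0^b|D|^q\,du$ at $u=a$. On $(a,b]$ the substitution $v=b-u$ gives $\int_{a}^{b}(b-u)^{q(\alpha-1)}\,du=\frac{(b-a)^{q(\alpha-1)+1}}{q(\alpha-1)+1}$, this integral being finite exactly because $\alpha-1+\frac1q>0$, and it contributes the term $\big(\ln\frac{t_2}{t_1}\big)^{q(\alpha-1)+1}$. On $[0,a]$ I would estimate the integrand $\big[(a-u)^{\alpha-1}-(b-u)^{\alpha-1}\big]^q$ by an elementary power-difference inequality, reducing it to the pure powers $(a-u)^{q(\alpha-1)}$ and $(b-u)^{q(\alpha-1)}$, each of which integrates by the same substitution into the closed forms $\frac{a^{q(\alpha-1)+1}}{q(\alpha-1)+1}$ and $\frac{b^{q(\alpha-1)+1}-(b-a)^{q(\alpha-1)+1}}{q(\alpha-1)+1}$. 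Writing $r=q(\alpha-1)+1$ and collecting the three pieces assembles a bound of the same form as the bracketed expression, namely a combination of $(\ln t_1)^{r}$, $(\ln t_2)^{r}$ and $\big(\ln\frac{t_2}{t_1}\big)^{r}$ divided by $r$.

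The step I expect to be the main obstacle is precisely this $L^{q}$-estimate of the kernel difference on $[0,a]$: the crude inequality $(x-y)^{q}\le x^{q}-y^{q}$ (valid for $q\ge1$ and $x\ge y\ge 0$) is safe and reduces everything to the power integrals above, but it is lossy, so reproducing the sharp constant in the stated bracket requires selecting a tighter elementary inequality and tracking the three power terms carefully. Alongside this, the two standing hypotheses $\gamma<\frac1p$ and $\alpha-1+\frac1q>0$ must be invoked at exactly the two places where the integrands are singular — at $u=0$ for the weight and at $u=a,b$ for the kernel — since they are nothing other than the integrability thresholds there; and the sign convention for $\ln\frac{t_1}{t_2}$ noted above must be kept consistent throughout, so that the change of sign of the kernel across $u=a$ does not introduce a spurious term.
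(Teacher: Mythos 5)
First, a point of comparison that matters here: the paper contains no proof of this lemma at all — it is imported verbatim (and, as it turns out, imperfectly) from the cited reference \cite{JinRong}. So your attempt can only be judged against the statement itself. Your preliminary repairs are sensible and in fact forced: as printed the statement is ill-posed, since $\left(\ln\frac{t_1}{s}\right)^{\alpha-1}$ is undefined for $s\in(t_1,t_2]$ and $\left(\ln\frac{t_1}{t_2}\right)^{q(\alpha-1)+1}$ is a fractional power of a negative number. Your overall strategy — substitute $u=\ln s$, apply H\"older with the exponents $p,q$, split the $L^{q}$-integral of the kernel difference at $t_1$, and control the piece on $[1,t_1]$ by $(x-y)^{q}\le x^{q}-y^{q}$ — is the standard route for lemmas of this type.

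The genuine gap is exactly the step you flag as the ``main obstacle,'' and it cannot be closed within your framework. Write $r=q(\alpha-1)+1$ and let $D$ denote your interpreted kernel. Carrying your outline through to the end yields
\begin{equation*}
\int_{1}^{t_2}(\ln s)^{-\gamma}\,\big|D(s)\big|\,\frac{ds}{s}\;\le\;\left(\frac{(\ln t_2)^{1-p\gamma}}{1-p\gamma}\right)^{\frac{1}{p}}\left[\frac{(\ln t_1)^{r}-(\ln t_2)^{r}+2\left(\ln\frac{t_2}{t_1}\right)^{r}}{r}\right]^{\frac{1}{q}},
\end{equation*}
which differs from the printed right-hand side in the signs of the two pure powers, in the coefficient $2$ created by the tail integral over $(t_1,t_2]$, and — crucially — in the exponent $\frac1q$ on the bracket, which the printed bound does not carry. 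No ``tighter elementary inequality'' on $[1,t_1]$ can repair this, because the failure is already present at the level of the exact H\"older factor: taking $\alpha=\frac34$, $p=q=2$, $\gamma\to 0^{+}$, $\ln t_1=0.9$, $\ln t_2=1$, one computes $\left(\int_{1}^{t_2}|D(s)|^{q}\,\frac{ds}{s}\right)^{1/q}\approx 0.87$, while the printed bracket equals $2\left(1-\sqrt{0.9}+\sqrt{0.1}\,\right)\approx 0.74$. Thus the H\"older product with these exponents already exceeds the claimed bound, so no sharpening of the pointwise estimate can ever end below it; the printed inequality, if true, is simply not provable by this method. What your argument does prove correctly is the repaired lemma in which the integral stops at $t_1$ (so the tail term and the factor $2$ disappear), the first factor carries $\ln t_1$ rather than $\ln t_2$, and the bracket $\left[\frac{(\ln t_1)^{r}-(\ln t_2)^{r}+\left(\ln\frac{t_2}{t_1}\right)^{r}}{r}\right]$ appears raised to the power $\frac1q$ — presumably the form actually established in the cited source. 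As a proof of the statement as printed, however, your proposal cannot be completed, and you should say so rather than defer the discrepancy to a hoped-for sharper inequality.
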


\begin{lemma}(see \cite{Gorenflo}, 4.2.3 and 4.2.7)\label{Gorenfloetal}
The Mittag-Leffler function $\mathbb{E}_{\alpha,\beta}$ satisfies the following identities:
\begin{itemize}
  \item [](i)~$\mathbb{E}_{\alpha,\beta}(z)=\frac{1}{\Gamma(\beta)}+z\mathbb{E}_{\alpha,\beta+\alpha}(z)$,
  \item [](ii)~$\mathbb{E}_{\frac{1}{2},1}(z)=\sum_{k=0}^{\infty}\frac{z^k}{\Gamma(\frac{k}{2}+1)}=e^{z^2}~\textnormal{erfc}(-z)$,
\end{itemize}
where \textnormal{erfc} is complementary to the error function \textnormal{erf}:
$$\textnormal{erfc}(z)=\frac{2}{\sqrt{\pi}}\int_{z}^{\infty}e^{-u^2}~du=1-\textnormal{erf}(z),~~z\in\mathbb{C}.$$
\end{lemma}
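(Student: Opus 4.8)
The plan is to derive both identities directly from the power-series definition of the Mittag-Leffler function,
$$\mathbb{E}_{\alpha,\beta}(z)=\sum_{k=0}^{\infty}\frac{z^{k}}{\Gamma(\alpha k+\beta)},\qquad \alpha>0,~\beta>0,$$
together with the elementary functional equation $\Gamma(w+1)=w\Gamma(w)$. For part (i) I would simply split off the $k=0$ term and reindex the remainder. Writing $\mathbb{E}_{\alpha,\beta}(z)=\frac{1}{\Gamma(\beta)}+\sum_{k=1}^{\infty}\frac{z^{k}}{\Gamma(\alpha k+\beta)}$ and substituting $k=j+1$ converts the tail into $z\sum_{j=0}^{\infty}\frac{z^{j}}{\Gamma(\alpha j+(\alpha+\beta))}=z\,\mathbb{E}_{\alpha,\beta+\alpha}(z)$, which is exactly the claimed recurrence. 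This manipulation is purely formal and is justified by the absolute convergence of the defining series for every $z\in\mathbb{C}$.

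For part (ii), the first equality is just the substitution $\alpha=\tfrac12$, $\beta=1$ in the definition. For the closed form I would avoid the awkward Cauchy product of $e^{z^2}$ with the series of $\textnormal{erfc}$ and instead argue by a first-order ordinary differential equation. Set $f(z):=\mathbb{E}_{1/2,1}(z)$ and $g(z):=e^{z^{2}}\,\textnormal{erfc}(-z)$. Differentiating the integral defining $\textnormal{erfc}$ gives $\frac{d}{dz}\textnormal{erfc}(-z)=\frac{2}{\sqrt{\pi}}e^{-z^{2}}$, whence a short computation yields $g'(z)=2z\,g(z)+\frac{2}{\sqrt{\pi}}$, subject to $g(0)=\textnormal{erfc}(0)=1$.

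The heart of the matter is to show that $f$ solves the same initial value problem. Differentiating the series for $f$ term by term, reindexing, and comparing with $2z\,f(z)$ produces
$$f'(z)-2z\,f(z)=\frac{1}{\Gamma(3/2)}+\sum_{j\ge 1}\Bigl(\frac{j+1}{\Gamma(\frac{j+3}{2})}-\frac{2}{\Gamma(\frac{j+1}{2})}\Bigr)z^{j}.$$
Each bracketed coefficient vanishes because $\Gamma(\frac{j+3}{2})=\frac{j+1}{2}\,\Gamma(\frac{j+1}{2})$, while $\frac{1}{\Gamma(3/2)}=\frac{2}{\sqrt{\pi}}$ and $f(0)=\frac{1}{\Gamma(1)}=1$; hence $f'(z)=2z\,f(z)+\frac{2}{\sqrt{\pi}}$ with $f(0)=1$. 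Since $f$ and $g$ satisfy the same linear ODE with the same initial value, uniqueness of solutions forces $f\equiv g$, which is the asserted identity. I expect the only genuine obstacle to be the bookkeeping in computing $f'(z)-2z\,f(z)$: one must shift two summation indices consistently and then invoke the Gamma recurrence to expose the term-by-term cancellation. Once that algebra is in place, the ODE-uniqueness argument closes (ii) cleanly, and (i) is immediate.
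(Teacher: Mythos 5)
Your proposal is correct, but it cannot be compared line-by-line with anything in the paper, because the paper gives no proof at all: this lemma is simply quoted from the reference of Gorenflo, Kilbas, Mainardi and Rogosin (their formulas 4.2.3 and 4.2.7). What you have produced is a self-contained proof of the cited facts, and it holds up. For (i), splitting off the $k=0$ term and reindexing is exactly the standard derivation of the recurrence $\mathbb{E}_{\alpha,\beta}(z)=\tfrac{1}{\Gamma(\beta)}+z\,\mathbb{E}_{\alpha,\beta+\alpha}(z)$, justified by absolute convergence of the defining series on all of $\mathbb{C}$. For (ii), your computation is sound: $f'(z)-2zf(z)$ telescopes to $\tfrac{2}{\sqrt{\pi}}$ via $\Gamma\bigl(\tfrac{j+3}{2}\bigr)=\tfrac{j+1}{2}\,\Gamma\bigl(\tfrac{j+1}{2}\bigr)$, and $g(z)=e^{z^{2}}\operatorname{erfc}(-z)$ satisfies the same initial value problem $y'=2zy+\tfrac{2}{\sqrt{\pi}}$, $y(0)=1$. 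This ODE route is genuinely different from (and arguably cleaner than) the usual textbook verification, which manipulates the power series of the error function or invokes the confluent hypergeometric representation; your argument trades that bookkeeping for a one-line cancellation. The only point worth tightening is the appeal to ``uniqueness of solutions'': since the identity is asserted for $z\in\mathbb{C}$, phrase uniqueness via the integrating factor, i.e.\ set $h=f-g$, note $h'=2zh$ with $h(0)=0$, and observe that $\frac{d}{dz}\bigl(h(z)e^{-z^{2}}\bigr)=0$ on the connected domain $\mathbb{C}$, forcing $h\equiv 0$; both $f$ and $g$ are entire ($f$ as a power series of infinite radius of convergence, $g$ as a product of entire functions), so this closes the argument without invoking real-variable Picard--Lindel\"of plus analytic continuation.
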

\begin{lemma}(\textbf{The mean value theorem for integrals})\label{MVTh}
If $f$ is a continuous function on the closed, bounded interval $[ a , b ]$, then there is at least one number $\xi $ in $( a , b  )$ for which
$$f(\xi)=\frac{1}{b-a}\int_{a}^{b}f(t)dt.$$
\end{lemma}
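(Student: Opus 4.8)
The plan is to reduce the statement to Rolle's theorem by introducing a suitable antiderivative, which has the advantage of delivering the point $\xi$ in the \emph{open} interval $(a,b)$ directly rather than merely in the closed interval $[a,b]$. First I would denote by $A=\frac{1}{b-a}\int_a^b f(t)\,dt$ the average value in question and define the auxiliary function
$$g(x)=\int_a^x f(t)\,dt-A\,(x-a),\qquad x\in[a,b].$$
Since $f$ is continuous on $[a,b]$, the fundamental theorem of calculus guarantees that $x\mapsto\int_a^x f(t)\,dt$ is continuous on $[a,b]$ and differentiable on $(a,b)$ with derivative $f$; hence $g$ inherits these regularity properties, and $g'(x)=f(x)-A$ on $(a,b)$.

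Next I would verify the two endpoint conditions required by Rolle's theorem. At the left endpoint $g(a)=0$ trivially, while at the right endpoint $g(b)=\int_a^b f(t)\,dt-A\,(b-a)=0$ by the very definition of $A$. Thus $g(a)=g(b)=0$, and Rolle's theorem applies to produce at least one $\xi\in(a,b)$ with $g'(\xi)=0$. Substituting into the expression for $g'$ gives $f(\xi)-A=0$, that is, $f(\xi)=\frac{1}{b-a}\int_a^b f(t)\,dt$, which is exactly the claimed identity.

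The step I expect to require the most care is the justification that $\xi$ genuinely lies in the open interval. A more elementary route via the extreme value theorem and the intermediate value theorem---bounding $m\,(b-a)\le\int_a^b f(t)\,dt\le M\,(b-a)$, where $m$ and $M$ are the minimum and maximum of $f$, and then invoking the intermediate value property---yields $\xi$ only in the closed interval $[a,b]$, and promoting it to $(a,b)$ would demand a separate argument distinguishing the constant and non-constant cases. The antiderivative-plus-Rolle approach circumvents this obstacle entirely, since Rolle's theorem is stated for the open interval by construction; the only point to watch is the correct attribution of differentiability to $g$ on $(a,b)$, which is precisely where the continuity hypothesis on $f$ enters.
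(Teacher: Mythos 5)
Your proof is correct. Note that the paper states this lemma as a classical auxiliary tool and gives no proof of it at all, so there is no in-paper argument to compare against. Your antiderivative-plus-Rolle construction is the standard rigorous proof: defining $g(x)=\int_a^x f(t)\,dt-A\,(x-a)$, checking $g(a)=g(b)=0$, and invoking Rolle's theorem via the fundamental theorem of calculus is exactly what is needed to place $\xi$ in the \emph{open} interval $(a,b)$, which is what the statement asserts. You are also right that the more elementary route (bounding the average by the minimum $m$ and maximum $M$ of $f$ and applying the intermediate value theorem) only locates $\xi$ in $[a,b]$ without extra case analysis, so your choice of method is the appropriate one for the statement as written.
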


In the following Lemma, we present some useful integrals which are used further in this paper.
\begin{lemma}\label{lem:integrals}
Using the famous beta function $\mathbb{B}(m,n)=\int_{0}^{1}(1-z)^{m-1}z^{n-1}~dz$, we get
\begin{eqnarray*}
(i)&&\int_{1}^{t}\left(\ln \frac{t}{s}\right)^{\beta-1}(\ln s)^{-\gamma}\frac{ds}{s}=(\ln t)^{\beta-\gamma}\mathbb{B}(\beta,1-\gamma).\\
(ii)&&\int_{1}^{t}\left(\ln \frac{t}{s}\right)^{\beta-1}\frac{ds}{s}=\frac{1}{\beta}(\ln t)^{\beta}.\\
(iii)&&\int_{1}^{e}\left(1-\ln s\right)^{\beta-1}(\ln s)^{-\gamma}\frac{ds}{s}=\mathbb{B}(\beta,1-\gamma).\\
(iv)&&\int_{1}^{e}\left(1-\ln s\right)^{\beta-1}\frac{ds}{s}=\frac{1}{\beta}.
\end{eqnarray*}
\end{lemma}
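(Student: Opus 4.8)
The plan is to prove the four integral identities by reducing each to the standard Beta integral $\mathbb{B}(m,n)=\int_{0}^{1}(1-z)^{m-1}z^{n-1}\,dz$ via an appropriate substitution. For each integral the natural change of variable is the one that rescales the logarithmic argument to the unit interval, so I would first treat parts (i) and (ii), which are integrals over $[1,t]$, and then parts (iii) and (iv), which are the boundary integrals over $[1,e]$ (the special case $t=e$, where $\ln t = 1$).

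For part (i), I would substitute $u=\dfrac{\ln s}{\ln t}$, so that $\ln s = u\ln t$, $\ln\frac{t}{s}=(1-u)\ln t$, and $\dfrac{ds}{s}=d(\ln s)=\ln t\,du$, with $u$ ranging over $[0,1]$ as $s$ ranges over $[1,t]$. Substituting gives
\begin{equation*}
\int_{1}^{t}\left(\ln \tfrac{t}{s}\right)^{\beta-1}(\ln s)^{-\gamma}\frac{ds}{s}
=\int_{0}^{1}\big((1-u)\ln t\big)^{\beta-1}\,(u\ln t)^{-\gamma}\,\ln t\,du
=(\ln t)^{\beta-\gamma}\int_{0}^{1}(1-u)^{\beta-1}u^{-\gamma}\,du,
\end{equation*}
and the remaining integral is exactly $\mathbb{B}(\beta,1-\gamma)$ upon matching exponents $m-1=\beta-1$ and $n-1=-\gamma$. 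Part (ii) is then the special case $\gamma=0$ of this computation, giving $\mathbb{B}(\beta,1)=\dfrac{1}{\beta}$ times $(\ln t)^{\beta}$; one can either invoke part (i) directly or note that $\mathbb{B}(\beta,1)=\dfrac{\Gamma(\beta)\Gamma(1)}{\Gamma(\beta+1)}=\dfrac{1}{\beta}$.

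Parts (iii) and (iv) follow by setting $t=e$ in (i) and (ii), since then $\ln t=1$ and $\ln\frac{t}{s}=1-\ln s$, so the prefactor $(\ln t)^{\beta-\gamma}$ (respectively $(\ln t)^{\beta}$) collapses to $1$ (respectively $\frac{1}{\beta}$); alternatively I would run the same substitution $u=\ln s$ directly on $[1,e]$. I do not anticipate any genuine obstacle here, as every step is a routine change of variables and an exponent match against the Beta function; the only point demanding care is verifying the convergence conditions $\beta>0$ and $1-\gamma>0$ (i.e. $\gamma<1$) that guarantee the Beta integrals are finite, which is precisely the regime $0\le\gamma<1$ built into the weighted space $C_{\gamma,\ln}$, so these hypotheses are already available from the ambient setup.
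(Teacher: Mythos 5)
Your proof is correct: the substitution $u=\ln s/\ln t$ (i.e.\ $\ln s = u\ln t$, $\frac{ds}{s}=\ln t\,du$) reduces (i) exactly to the Beta integral with $m=\beta$, $n=1-\gamma$, and your reduction of (ii) to the case $\gamma=0$ and of (iii)--(iv) to the case $t=e$ is valid, with the convergence conditions $\beta>0$, $\gamma<1$ correctly identified. The paper states this lemma without any proof at all, so your computation simply supplies the standard verification the authors implicitly rely on; there is nothing to contrast.
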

To end this section, we present the Schauder fixed point theorem which plays a main tool in the existence of the solution of (\ref{main}).
\begin{lemma}(\textbf{The Schauder fixed point theorem~\cite{Granas}})\label{fixed}
Let $E$ be a Banach space and $Q$ be a nonempty bounded convex and closed subset of $E$, and $N:Q\to Q$ is a compact and continuous map. Then $N$ has at least one fixed point in $Q$.
\end{lemma}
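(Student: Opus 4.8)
The plan is to deduce this infinite-dimensional fixed point statement from the finite-dimensional Brouwer fixed point theorem, via a finite-rank approximation of $N$ known as the \emph{Schauder projection}. First I would extract the relevant compactness: because $N$ is compact and $Q$ is bounded, the image $N(Q)$ is relatively compact, so $K := \overline{N(Q)}$ is a compact subset of $E$; moreover $K \subseteq Q$, since $N$ maps $Q$ into $Q$ and $Q$ is closed.

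For each $\varepsilon > 0$, total boundedness of $K$ lets me pick a finite $\varepsilon$-net $\{y_1,\dots,y_m\} \subseteq K$ so that every $y \in K$ is within $\varepsilon$ of some $y_i$. With the continuous weights $\mu_i(y) = \max\{0,\,\varepsilon - \|y - y_i\|\}$, which are strictly positive exactly on the open $\varepsilon$-ball about $y_i$ and whose sum never vanishes on $K$, I would define
\[
P_\varepsilon(y) = \frac{\sum_{i=1}^m \mu_i(y)\, y_i}{\sum_{i=1}^m \mu_i(y)}, \qquad y \in K.
\]
This $P_\varepsilon$ is continuous, its range lies in the finite-dimensional compact convex set $E_\varepsilon := \operatorname{conv}\{y_1,\dots,y_m\}$, and it satisfies the key estimate $\|P_\varepsilon(y) - y\| \le \varepsilon$ for all $y \in K$, because $P_\varepsilon(y)$ is a convex combination of points $y_i$ each at distance at most $\varepsilon$ from $y$ (the weights kill every $y_i$ farther than $\varepsilon$).

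Next I would iterate against Brouwer. Since every $y_i \in K \subseteq Q$ and $Q$ is convex, we have $E_\varepsilon \subseteq Q$; hence $N$ carries $E_\varepsilon$ into $N(Q) \subseteq K$, and $P_\varepsilon$ sends this back into $E_\varepsilon$, so $N_\varepsilon := P_\varepsilon \circ N$ is a continuous self-map of the finite-dimensional compact convex set $E_\varepsilon$. Brouwer's theorem (applied after a homeomorphism with a compact convex subset of Euclidean space) yields a fixed point $x_\varepsilon \in E_\varepsilon$, i.e.\ $P_\varepsilon(N(x_\varepsilon)) = x_\varepsilon$. Taking $\varepsilon = 1/n$ produces a sequence $(x_n)$ in $Q$ with
\[
\|x_n - N(x_n)\| = \|P_{1/n}(N(x_n)) - N(x_n)\| \le \tfrac{1}{n} \longrightarrow 0.
\]

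Finally I would pass to the limit. The sequence $(N(x_n))$ lies in the compact set $K$, so a subsequence $N(x_{n_k}) \to z \in K \subseteq Q$; the displayed estimate then forces $x_{n_k} \to z$ as well, and continuity of $N$ gives $N(x_{n_k}) \to N(z)$, whence $N(z) = z$. Thus $z \in Q$ is the sought fixed point. I expect the main obstacle to be the clean verification of the Schauder projection lemma — continuity of $P_\varepsilon$, the inclusion of its range into $Q$ (where convexity of $Q$ is essential), and the uniform $\varepsilon$-approximation — together with the correct invocation of Brouwer's theorem on the finite-dimensional slice; the concluding limiting argument is then routine once the compactness has been organized.
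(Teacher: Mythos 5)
Your proposal is correct, but note that the paper itself offers no proof of this lemma at all: it is quoted as a ready-made tool, with the proof delegated to the cited reference of Granas and Dugundji. What you have written is the classical argument — and, in essence, the very proof found in that reference: reduce to finite dimensions via the Schauder projection $P_\varepsilon$ built from a finite $\varepsilon$-net of the compact set $K=\overline{N(Q)}$, apply Brouwer's theorem to the self-map $P_\varepsilon\circ N$ of the finite-dimensional compact convex set $\operatorname{conv}\{y_1,\dots,y_m\}\subseteq Q$, and then let $\varepsilon=1/n\to 0$, using compactness of $K$ to extract a convergent subsequence of $(N(x_n))$ and continuity of $N$ to identify the limit as a fixed point. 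All the delicate points are handled properly: the weights $\mu_i$ vanish precisely on the $y_i$ farther than $\varepsilon$, which gives the uniform estimate $\|P_\varepsilon(y)-y\|\le\varepsilon$ on $K$; convexity of $Q$ is invoked exactly where it is needed (to ensure $\operatorname{conv}\{y_1,\dots,y_m\}\subseteq Q$, so that $N$ may be applied there); and closedness of $Q$ guarantees both $K\subseteq Q$ and that the limit point $z$ lies in $Q$. The only cosmetic remark is that your appeal to Brouwer's theorem on a general finite-dimensional compact convex set deserves the one-line justification you allude to (such a set is a retract of a closed ball in its affine span, or homeomorphic to a ball of the appropriate dimension), but this is standard. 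In the context of this paper your argument would make the exposition self-contained, which the authors chose not to do.
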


\section{Main Results}
Let $B_{r}=\left\{x\in C_{\gamma,\ln}([1,e],\mathbb{R}):\|x\|_{\gamma,\ln}\leq r\right\}$ with
$r\geq\frac{\omega_1}{1-L_1\omega_2},$
where $$\omega_1=\left(|c_0|+\frac{L_2\mathbb{B}(\alpha,1+\beta)}{\Gamma(\beta+1)}+\frac{L_2\mathbb{B}(\alpha,\beta)}{\Gamma(\beta+1)}\right)\mathbb{E}_{\alpha,\alpha}(\lambda),$$
and
$$\omega_2=\frac{\mathbb{B}(\beta,1-\gamma)}{\Gamma(\beta)}\big(\mathbb{B}(\alpha,1+\beta-\gamma)+\mathbb{B}(\alpha,\beta)\big)\mathbb{E}_{\alpha,\alpha}(\lambda).$$

We define the operator $(\mathcal{H}x)(t):B_{r}\to C_{\gamma,\ln}([1,e],\mathbb{R})$ by

\begin{equation}\label{operator}
\begin{split}
(\mathcal{H}x)(t)&=c_{0}(\ln t)^{\alpha-1}\mathbb{E}_{\alpha,\alpha}(\lambda(\ln t)^{\alpha})
+\int_{1}^{t}\left(\ln \frac{t}{s}\right)^{\alpha-1}\mathbb{E}_{\alpha,\alpha}\left(\lambda\left(\ln \frac{t}{s}\right)^{\alpha}\right)\\
&\times \left[\frac{1}{\Gamma(\beta)}\int_{1}^{s}\left(\ln \frac{s}{\tau}\right)^{\beta-1}f(\tau,x(\tau))\frac{d\tau}{\tau}-\frac{(\ln s)^{\beta-1}}{\Gamma(\beta)}\int_{1}^{e}\left(1-\ln\tau\right)^{\beta-1}f(\tau,x(\tau))\frac{d\tau}{\tau}\right]\frac{ds}{s}
\end{split}
\end{equation}
\begin{theorem}\label{th:existence}
Assume that:
\begin{itemize}
  \item []$\textbf{(H1)}$ ~$f:[1,e]\times\mathbb{R}\to \mathbb{R}$ is a continuous function, $f(\cdot,x(\cdot))\in C_{\gamma,\ln}([1,e],\mathbb{R})$ and there exist constants $0\leq L_1<\omega_{2}^{-1},~L_2>0$ such that
  $$|f(t,x)|\leq L_1|x|+L_2,~~\forall (t,x)\in[1,e]\times\mathbb{R}.$$
Then (\ref{main}) has at least one solution on $[1,e]$.
\end{itemize}
\end{theorem}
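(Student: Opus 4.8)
The plan is to recast (\ref{main}) as a fixed point problem for the operator $\mathcal{H}$ defined in (\ref{operator}). By Lemma \ref{lem:aux}, taking $\sigma(t)=f(t,x(t))$, a function $x\in C_{\gamma,\ln}([1,e],\mathbb{R})$ solves (\ref{main}) if and only if $x=\mathcal{H}x$. Hence it suffices to produce a fixed point of $\mathcal{H}$, and I would obtain one by verifying on the closed, bounded, convex set $B_r$ the three hypotheses of the Schauder theorem (Lemma \ref{fixed}): that $\mathcal{H}(B_r)\subseteq B_r$, that $\mathcal{H}$ is continuous, and that $\mathcal{H}(B_r)$ is relatively compact.

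First I would establish the self-mapping property. Fix $x\in B_r$ and estimate $(\ln t)^{\gamma}(\mathcal{H}x)(t)$. From \textbf{(H1)} together with $|x(\tau)|\le(\ln\tau)^{-\gamma}\|x\|_{\gamma,\ln}$ one has $|f(\tau,x(\tau))|\le L_1(\ln\tau)^{-\gamma}\|x\|_{\gamma,\ln}+L_2$. Substituting this into the two inner integrals of (\ref{operator}) and evaluating them by parts (i)--(iv) of Lemma \ref{lem:integrals} turns each inner integral into an explicit power of $\ln s$ times a beta function; a second application of the beta-function identity $\int_1^t(\ln\frac{t}{s})^{a-1}(\ln s)^{b-1}\frac{ds}{s}=(\ln t)^{a+b-1}\mathbb{B}(a,b)$ then resolves the outer $s$-integral. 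Using the monotonicity of $\mathbb{E}_{\alpha,\alpha}$ on $[0,\infty)$ to bound $\mathbb{E}_{\alpha,\alpha}(\lambda(\ln\frac{t}{s})^{\alpha})\le\mathbb{E}_{\alpha,\alpha}(\lambda)$ for $t\in[1,e]$, and noting that the remaining powers of $\ln t$ are nonnegative so that the supremum is reached by setting $\ln t=1$, I expect the clean estimate $\|\mathcal{H}x\|_{\gamma,\ln}\le\omega_1+L_1\omega_2\|x\|_{\gamma,\ln}$, with the $L_2$ contributions collecting into $\omega_1$ and the $L_1$ contributions into $\omega_2$. Since $\|x\|_{\gamma,\ln}\le r$ and $r\ge\frac{\omega_1}{1-L_1\omega_2}$ with $L_1\omega_2<1$ by \textbf{(H1)}, this yields $\|\mathcal{H}x\|_{\gamma,\ln}\le\omega_1+L_1\omega_2 r\le r$, so $\mathcal{H}x\in B_r$.

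Next I would check continuity: if $x_n\to x$ in $C_{\gamma,\ln}$, then $f(\cdot,x_n(\cdot))\to f(\cdot,x(\cdot))$ uniformly by continuity of $f$, and the same integral bounds dominate the difference $\mathcal{H}x_n-\mathcal{H}x$, giving $\|\mathcal{H}x_n-\mathcal{H}x\|_{\gamma,\ln}\to0$. Relative compactness of $\mathcal{H}(B_r)$ I would deduce from the Arzel\`{a}--Ascoli theorem applied to the weighted functions $t\mapsto(\ln t)^{\gamma}(\mathcal{H}x)(t)$: uniform boundedness is already contained in the self-mapping estimate, so the crux is equicontinuity. For $1<t_1<t_2\le e$ I would split $(\ln t_2)^{\gamma}(\mathcal{H}x)(t_2)-(\ln t_1)^{\gamma}(\mathcal{H}x)(t_1)$ over the intervals $[1,t_1]$ and $[t_1,t_2]$, bounding $f$ by its growth constant as before.

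The main obstacle is precisely this equicontinuity estimate: the kernel $(\ln\frac{t}{s})^{\alpha-1}$ is singular at $s=t$, and combined with the weight $(\ln s)^{-\gamma}$ the naive bound diverges. This is exactly what Lemma \ref{Ronglemma} is designed to control, as it furnishes a bound on $\int_{1}^{t_2}(\ln s)^{-\gamma}\big((\ln\frac{t_2}{s})^{\alpha-1}-(\ln\frac{t_1}{s})^{\alpha-1}\big)\frac{ds}{s}$ that tends to $0$ as $t_2\to t_1$; the Mittag-Leffler factors, being continuous and uniformly bounded on $[1,e]$, do not affect the limit. Once equicontinuity is secured, Arzel\`{a}--Ascoli gives relative compactness, all three hypotheses of Lemma \ref{fixed} hold, and the Schauder theorem produces a fixed point of $\mathcal{H}$, that is, a solution of (\ref{main}) on $[1,e]$.
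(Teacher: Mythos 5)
Your proposal is correct and follows essentially the same route as the paper's own proof: equivalence via Lemma \ref{lem:aux}, then Schauder's theorem (Lemma \ref{fixed}) on $B_r$, with the invariance estimate $\|\mathcal{H}x\|_{\gamma,\ln}\leq\omega_1+L_1\omega_2\|x\|_{\gamma,\ln}\leq r$ obtained from Lemma \ref{lem:integrals}, continuity reduced to $\|\mathcal{H}x_n-\mathcal{H}x\|_{\gamma,\ln}\leq\omega_2\|f(\cdot,x_n(\cdot))-f(\cdot,x(\cdot))\|_{\gamma,\ln}$, and compactness via Arzel\`{a}--Ascoli with Lemma \ref{Ronglemma} controlling the singular kernel difference (the paper additionally invokes H\"{o}lder's inequality and the mean value theorem, Lemma \ref{MVTh}, to finish the equicontinuity terms you sketch). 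The only notable deviation is that you check equicontinuity of the weighted functions $(\ln t)^{\gamma}(\mathcal{H}x)(t)$, which is in fact the natural formulation for compactness in $C_{\gamma,\ln}$, whereas the paper estimates the unweighted difference $|(\mathcal{H}x)(t_2)-(\mathcal{H}x)(t_1)|$.
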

\begin{proof}
The proof will be through several steps.\\

\textbf{Step 1.} We show that $\mathcal{H}(B_{r})\subset B_{r}$.\\

For any $x\in C_{\gamma,\ln}([1,e],\mathbb{R})$ and Lemma \ref{lem:integrals}, we have\\

$\left|(\ln t)^{\gamma}(\mathcal{H}x)(t)\right|$
\begin{eqnarray*}
&\leq&\left|c_{0}(\ln t)^{\gamma+\alpha-1}\mathbb{E}_{\alpha,\alpha}(\lambda(\ln t)^{\alpha})\right|
+\left|(\ln t)^{\gamma}\int_{1}^{t}\left(\ln \frac{t}{s}\right)^{\alpha-1}\mathbb{E}_{\alpha,\alpha}\left(\lambda\left(\ln \frac{t}{s}\right)^{\alpha}\right)\left[\frac{1}{\Gamma(\beta)}\int_{1}^{s}\left(\ln \frac{s}{\tau}\right)^{\beta-1}f(\tau,x(\tau))\frac{d\tau}{\tau}\right.\right.\\
&-&\left.\left.\frac{(\ln s)^{\beta-1}}{\Gamma(\beta)}\int_{1}^{e}\left(1-\ln\tau\right)^{\beta-1}f(\tau,x(\tau))\frac{d\tau}{\tau}\right]\frac{ds}{s}\right|\\
&\leq& |c_0|(\ln t)^{\gamma+\alpha-1}\mathbb{E}_{\alpha,\alpha}(\lambda)+(\ln t)^{\gamma}\int_{1}^{t}\left(\ln \frac{t}{s}\right)^{\alpha-1}\mathbb{E}_{\alpha,\alpha}\left(\lambda\left(\ln \frac{t}{s}\right)^{\alpha}\right)\left[\frac{1}{\Gamma(\beta)}\int_{1}^{s}\left(\ln \frac{s}{\tau}\right)^{\beta-1}|f(\tau,x(\tau))|\frac{d\tau}{\tau}\right.\\
&+&\left.\frac{(\ln s)^{\beta-1}}{\Gamma(\beta)}\int_{1}^{e}\left(1-\ln\tau\right)^{\beta-1}|f(\tau,x(\tau))|\frac{d\tau}{\tau}\right]\frac{ds}{s}\\
&\leq& |c_0|\mathbb{E}_{\alpha,\alpha}(\lambda)+\int_{1}^{t}\left(\ln \frac{t}{s}\right)^{\alpha-1}\mathbb{E}_{\alpha,\alpha}\left(\lambda\left(\ln \frac{t}{s}\right)^{\alpha}\right)\left[\frac{1}{\Gamma(\beta)}\int_{1}^{s}\left(\ln \frac{s}{\tau}\right)^{\beta-1}(L_1|x(\tau)|+L_2)\frac{d\tau}{\tau}\right.\\
&+&\left.\frac{(\ln s)^{\beta-1}}{\Gamma(\beta)}\int_{1}^{e}\left(1-\ln\tau\right)^{\beta-1}(L_1|x(\tau)|+L_2)\frac{d\tau}{\tau}\right]\frac{ds}{s}
\end{eqnarray*}
\begin{eqnarray*}
&\leq& |c_0|\mathbb{E}_{\alpha,\alpha}(\lambda)+\int_{1}^{t}\left(\ln \frac{t}{s}\right)^{\alpha-1}\mathbb{E}_{\alpha,\alpha}\left(\lambda\left(\ln \frac{t}{s}\right)^{\alpha}\right)\\
&\times& \left[\frac{L_1}{\Gamma(\beta)}\int_{1}^{s}\left(\ln \frac{s}{\tau}\right)^{\beta-1}(\ln \tau)^{-\gamma}\|x\|_{\gamma,\ln}\frac{d\tau}{\tau}+\frac{L_2}{\Gamma(\beta)}\int_{1}^{s}\left(\ln \frac{s}{\tau}\right)^{\beta-1}\frac{d\tau}{\tau}\right.\\
&+&\left.\frac{L_1(\ln s)^{\beta-1}}{\Gamma(\beta)}\int_{1}^{e}\left(1-\ln\tau\right)^{\beta-1}(\ln \tau)^{-\gamma}\|x\|_{\gamma,\ln}\frac{d\tau}{\tau}+\frac{L_2(\ln s)^{\beta-1}}{\Gamma(\beta)}\int_{1}^{e}\left(1-\ln\tau\right)^{\beta-1}\frac{d\tau}{\tau}\right]\frac{ds}{s}\\
&=& |c_0|\mathbb{E}_{\alpha,\alpha}(\lambda)+\int_{1}^{t}\left(\ln \frac{t}{s}\right)^{\alpha-1}\mathbb{E}_{\alpha,\alpha}\left(\lambda\left(\ln \frac{t}{s}\right)^{\alpha}\right)\left[\frac{L_1\|x\|_{\gamma,\ln}}{\Gamma(\beta)}(\ln s)^{\beta-\gamma}\mathbb{B}(\beta,1-\gamma)+\frac{L_2}{\Gamma(\beta+1)}(\ln s)^{\beta}\right.\\
&+&\left.\left(\frac{L_1\|x\|_{\gamma,\ln}}{\Gamma(\beta)}\mathbb{B}(\beta,1-\gamma)+\frac{L_2}{\Gamma(\beta+1)}\right)(\ln s)^{\beta-1}\right]\frac{ds}{s}\\
&\leq& |c_0|\mathbb{E}_{\alpha,\alpha}(\lambda)+\frac{L_1\|x\|_{\gamma,\ln}}{\Gamma(\beta)}\mathbb{B}(\beta,1-\gamma)\mathbb{E}_{\alpha,\alpha}(\lambda)\int_{1}^{t}\left(\ln \frac{t}{s}\right)^{\alpha-1}(\ln s)^{\beta-\gamma}\frac{ds}{s}\\
&+&\frac{L_2}{\Gamma(\beta+1)}\mathbb{E}_{\alpha,\alpha}(\lambda)\int_{1}^{t}\left(\ln \frac{t}{s}\right)^{\alpha-1}(\ln s)^{\beta}\frac{ds}{s}\\
&+&\left(\frac{L_1\|x\|_{\gamma,\ln}}{\Gamma(\beta)}\mathbb{B}(\beta,1-\gamma)+\frac{L_2}{\Gamma(\beta+1)}\right)\mathbb{E}_{\alpha,\alpha}(\lambda)\int_{1}^{t}\left(\ln \frac{t}{s}\right)^{\alpha-1}(\ln s)^{\beta-1}\frac{ds}{s}\\
&=&|c_0|\mathbb{E}_{\alpha,\alpha}(\lambda)+\frac{L_1\|x\|_{\gamma,\ln}}{\Gamma(\beta)}\mathbb{B}(\beta,1-\gamma)\mathbb{E}_{\alpha,\alpha}(\lambda)(\ln t)^{\alpha+\beta-\gamma}\mathbb{B}(\alpha,1+\beta-\gamma)\\
&+&\frac{L_2}{\Gamma(\beta+1)}\mathbb{E}_{\alpha,\alpha}(\lambda)(\ln t)^{\alpha+\beta}\mathbb{B}(\alpha,1+\beta)\\
&+&\left(\frac{L_1\|x\|_{\gamma,\ln}}{\Gamma(\beta)}\mathbb{B}(\beta,1-\gamma)+\frac{L_2}{\Gamma(\beta+1)}\right)\mathbb{E}_{\alpha,\alpha}(\lambda)(\ln t)^{\alpha+\beta-1}\mathbb{B}(\alpha,\beta)\\
&\leq&\omega_1+L_1\omega_2\|x\|_{\gamma,\ln}\\
&\leq&\omega_1+L_1\omega_2 r\leq r,
\end{eqnarray*}
which implies that
$\|\mathcal{H}x\|_{\gamma,\ln}\leq r$, therefore $\mathcal{H}(B_{r})\subset B_{r}$.\\

\textbf{Step 2.} We show that $\mathcal{H}$ is continuous.\\

$\left|(\ln t)^{\gamma}((\mathcal{H}x_n)(t)-(\mathcal{H}x)(t))\right|$
\begin{eqnarray*}
&\leq&\left|(\ln t)^{\gamma}\int_{1}^{t}\left(\ln \frac{t}{s}\right)^{\alpha-1}\mathbb{E}_{\alpha,\alpha}\left(\lambda\left(\ln \frac{t}{s}\right)^{\alpha}\right)\left[\frac{1}{\Gamma(\beta)}\int_{1}^{s}\left(\ln \frac{s}{\tau}\right)^{\beta-1}(f(\tau,x_n(\tau))-f(\tau,x(\tau)))\frac{d\tau}{\tau}\right.\right.\\
&-&\left.\left.\frac{(\ln s)^{\beta-1}}{\Gamma(\beta)}\int_{1}^{e}\left(1-\ln\tau\right)^{\beta-1}(f(\tau,x_n(\tau))-f(\tau,x(\tau)))\frac{d\tau}{\tau}\right]\frac{ds}{s}\right|\\
&\leq&\mathbb{E}_{\alpha,\alpha}(\lambda)\int_{1}^{t}\left(\ln \frac{t}{s}\right)^{\alpha-1}\left[\frac{1}{\Gamma(\beta)}\int_{1}^{s}\left(\ln \frac{s}{\tau}\right)^{\beta-1}\big|f(\tau,x_n(\tau))-f(\tau,x(\tau))\big|\frac{d\tau}{\tau}\right.\\
&+&\left.\frac{(\ln s)^{\beta-1}}{\Gamma(\beta)}\int_{1}^{e}\left(1-\ln\tau\right)^{\beta-1}\big|f(\tau,x_n(\tau))-f(\tau,x(\tau))\big|\frac{d\tau}{\tau}\right]\frac{ds}{s}
\end{eqnarray*}
\begin{eqnarray*}
&\leq&\mathbb{E}_{\alpha,\alpha}(\lambda)\int_{1}^{t}\left(\ln \frac{t}{s}\right)^{\alpha-1}\left[\frac{1}{\Gamma(\beta)}\int_{1}^{s}\left(\ln \frac{s}{\tau}\right)^{\beta-1}(\ln \tau)^{-\gamma}\big\|f(\cdot,x_n(\cdot))-f(\cdot,x(\cdot))\big\|_{\gamma,\ln}\frac{d\tau}{\tau}\right.\\
&+&\left.\frac{(\ln s)^{\beta-1}}{\Gamma(\beta)}\int_{1}^{e}\left(1-\ln\tau\right)^{\beta-1}(\ln \tau)^{-\gamma}\big\|f(\cdot,x_n(\cdot))-f(\cdot,x(\cdot))\big\|_{\gamma,\ln}\frac{d\tau}{\tau}\right]\frac{ds}{s}\\
&=&\frac{\mathbb{B}(\beta,1-\gamma)}{\Gamma(\beta)}\mathbb{E}_{\alpha,\alpha}(\lambda)\left[\int_{1}^{t}\left(\ln \frac{t}{s}\right)^{\alpha-1}(\ln s)^{\beta-\gamma}\frac{ds}{s}\right.\\
&+&\left.\int_{1}^{t}\left(\ln \frac{t}{s}\right)^{\alpha-1}(\ln s)^{\beta-1}\frac{ds}{s}\right]\big\|f(\cdot,x_n(\cdot))-f(\cdot,x(\cdot))\big\|_{\gamma,\ln}\\
&=&\frac{\mathbb{B}(\beta,1-\gamma)}{\Gamma(\beta)}\mathbb{E}_{\alpha,\alpha}(\lambda)\left[(\ln t)^{\alpha+\beta-\gamma}\mathbb{B}(\alpha,1+\beta-\gamma)+(\ln t)^{\alpha+\beta-1}\mathbb{B}(\alpha,\beta)\right]\big\|f(\cdot,x_n(\cdot))-f(\cdot,x(\cdot))\big\|_{\gamma,\ln}\\
&\leq&\frac{\mathbb{B}(\beta,1-\gamma)}{\Gamma(\beta)}\bigg(\mathbb{B}(\alpha,1+\beta-\gamma)+\mathbb{B}(\alpha,\beta)\bigg)\mathbb{E}_{\alpha,\alpha}(\lambda)\big\|f(\cdot,x_n(\cdot))-f(\cdot,x(\cdot))\big\|_{\gamma,\ln}.
\end{eqnarray*}
Hence, we get
$$\big\|\mathcal{H}x_n-\mathcal{H}x\big\|_{\gamma,\ln}\leq \omega_2\big\|f(\cdot,x_n(\cdot))-f(\cdot,x(\cdot))\big\|_{\gamma,\ln}~~,$$
and the continuity of $f$ implies that $\mathcal{H}$ is continuous.\\

\textbf{Step 3.}~ We show $\mathcal{H}$ is relatively compact on $B_r$.\\

According to \textbf{Step 1}, we showed that $\mathcal{H}(B_{r})\subset B_{r}$. Thus $\mathcal{H}(B_{r})$ is uniformly bounded.
 It remains to show that $\mathcal{H}$ is equicontinuous.\\

For $1<t_1<t_2\leq e$ and $x\in B_r$, we have\\

$\big|(\mathcal{H}x)(t_2)-(\mathcal{H}x)(t_1)\big|$
\begin{eqnarray*}
&\leq&\big|c_{0}(\ln t_2)^{\alpha-1}\mathbb{E}_{\alpha,\alpha}\big(\lambda(\ln t_2)^{\alpha}\big)-c_{0}(\ln t_1)^{\alpha-1}\mathbb{E}_{\alpha,\alpha}\big(\lambda(\ln t_2)^{\alpha}\big)\big|\\
&+&\big|c_{0}(\ln t_1)^{\alpha-1}\mathbb{E}_{\alpha,\alpha}\big(\lambda(\ln t_2)^{\alpha}\big)-c_{0}(\ln t_1)^{\alpha-1}\mathbb{E}_{\alpha,\alpha}\big(\lambda(\ln t_1)^{\alpha}\big)\big|\\
&+&\bigg|\int_{1}^{t_2}\left(\ln \frac{t_2}{s}\right)^{\alpha-1}\mathbb{E}_{\alpha,\alpha}\big(\lambda\left(\ln \frac{t_2}{s}\right)^{\alpha}\big)\bigg[\frac{1}{\Gamma(\beta)}\int_{1}^{s}\left(\ln \frac{s}{\tau}\right)^{\beta-1}f(\tau,x(\tau))\frac{d\tau}{\tau}\\
&-&\frac{(\ln s)^{\beta-1}}{\Gamma(\beta)}\int_{1}^{e}\left(1-\ln\tau\right)^{\beta-1}f(\tau,x(\tau))\frac{d\tau}{\tau}\bigg]\frac{ds}{s}\\
&-&\int_{1}^{t_1}\left(\ln \frac{t_2}{s}\right)^{\alpha-1}\mathbb{E}_{\alpha,\alpha}\big(\lambda\left(\ln \frac{t_1}{s}\right)^{\alpha}\big)\bigg[\frac{1}{\Gamma(\beta)}\int_{1}^{s}\left(\ln \frac{s}{\tau}\right)^{\beta-1}f(\tau,x(\tau))\frac{d\tau}{\tau}\\
&-&\frac{(\ln s)^{\beta-1}}{\Gamma(\beta)}\int_{1}^{e}\left(1-\ln\tau\right)^{\beta-1}f(\tau,x(\tau))\frac{d\tau}{\tau}\bigg]\frac{ds}{s}\bigg|
\end{eqnarray*}
\begin{eqnarray*}
&\leq&\bigg|c_{0}\mathbb{E}_{\alpha,\alpha}(\lambda(\ln t_2)^{\alpha})\bigg((\ln t_2)^{\alpha-1}-(\ln t_1)^{\alpha-1}\bigg)\bigg|\\
&+&\bigg|c_{0}(\ln t_1)^{\alpha-1}\bigg(\mathbb{E}_{\alpha,\alpha}\big(\lambda(\ln t_2)^{\alpha})-\mathbb{E}_{\alpha,\alpha}(\lambda(\ln t_1)^{\alpha}\big)\bigg)\bigg|\\
&+&\bigg|\int_{1}^{t_2}\left(\ln \frac{t_2}{s}\right)^{\alpha-1}\mathbb{E}_{\alpha,\alpha}\big(\lambda\left(\ln \frac{t_2}{s}\right)^{\alpha}\big)\bigg[\frac{1}{\Gamma(\beta)}\int_{1}^{s}\left(\ln \frac{s}{\tau}\right)^{\beta-1}f(\tau,x(\tau))\frac{d\tau}{\tau}\\
&-&\frac{(\ln s)^{\beta-1}}{\Gamma(\beta)}\int_{1}^{e}\left(1-\ln\tau\right)^{\beta-1}f(\tau,x(\tau))\frac{d\tau}{\tau}\bigg]\frac{ds}{s}\\
&-&\int_{1}^{t_2}\left(\ln \frac{t_2}{s}\right)^{\alpha-1}\mathbb{E}_{\alpha,\alpha}\big(\lambda\left(\ln \frac{t_1}{s}\right)^{\alpha}\big)\bigg[\frac{1}{\Gamma(\beta)}\int_{1}^{s}\left(\ln \frac{s}{\tau}\right)^{\beta-1}f(\tau,x(\tau))\frac{d\tau}{\tau}\\
&-&\frac{(\ln s)^{\beta-1}}{\Gamma(\beta)}\int_{1}^{e}\left(1-\ln\tau\right)^{\beta-1}f(\tau,x(\tau))\frac{d\tau}{\tau}\bigg]\frac{ds}{s}\bigg|\\
&+&\bigg|\int_{1}^{t_2}\left(\ln \frac{t_2}{s}\right)^{\alpha-1}\mathbb{E}_{\alpha,\alpha}\big(\lambda\left(\ln \frac{t_1}{s}\right)^{\alpha}\big)\bigg[\frac{1}{\Gamma(\beta)}\int_{1}^{s}\left(\ln \frac{s}{\tau}\right)^{\beta-1}f(\tau,x(\tau))\frac{d\tau}{\tau}\\
&-&\frac{(\ln s)^{\beta-1}}{\Gamma(\beta)}\int_{1}^{e}\left(1-\ln\tau\right)^{\beta-1}f(\tau,x(\tau))\frac{d\tau}{\tau}\bigg]\frac{ds}{s}\\
&-&\int_{1}^{t_2}\left(\ln \frac{t_1}{s}\right)^{\alpha-1}\mathbb{E}_{\alpha,\alpha}\big(\lambda\left(\ln \frac{t_1}{s}\right)^{\alpha}\big)\bigg[\frac{1}{\Gamma(\beta)}\int_{1}^{s}\left(\ln \frac{s}{\tau}\right)^{\beta-1}f(\tau,x(\tau))\frac{d\tau}{\tau}\\
&-&\frac{(\ln s)^{\beta-1}}{\Gamma(\beta)}\int_{1}^{e}\left(1-\ln\tau\right)^{\beta-1}f(\tau,x(\tau))\frac{d\tau}{\tau}\bigg]\frac{ds}{s}\bigg|\\
&+&\bigg|\int_{t_1}^{t_2}\left(\ln \frac{t_1}{s}\right)^{\alpha-1}\mathbb{E}_{\alpha,\alpha}\big(\lambda\left(\ln \frac{t_1}{s}\right)^{\alpha}\big)\bigg[\frac{1}{\Gamma(\beta)}\int_{1}^{s}\left(\ln \frac{s}{\tau}\right)^{\beta-1}f(\tau,x(\tau))\frac{d\tau}{\tau}\\
&-&\frac{(\ln s)^{\beta-1}}{\Gamma(\beta)}\int_{1}^{e}\left(1-\ln\tau\right)^{\beta-1}f(\tau,x(\tau))\frac{d\tau}{\tau}\bigg]\frac{ds}{s}\bigg|\\
&\leq&\mathbb{E}_{\alpha,\alpha}(\lambda)\bigg|c_{0}\bigg((\ln t_2)^{\alpha-1}-(\ln t_1)^{\alpha-1}\bigg)\bigg|\\
&+&\bigg|c_{0}(\ln t_1)^{\alpha-1}\bigg(\mathbb{E}_{\alpha,\alpha}\big(\lambda(\ln t_2)^{\alpha})-\mathbb{E}_{\alpha,\alpha}(\lambda(\ln t_1)^{\alpha}\big)\bigg)\bigg|\\
&+&\bigg|\int_{1}^{t_2}\left(\ln \frac{t_2}{s}\right)^{\alpha-1}\bigg(\mathbb{E}_{\alpha,\alpha}\big(\lambda\left(\ln \frac{t_2}{s}\right)^{\alpha}\big)-\mathbb{E}_{\alpha,\alpha}\big(\lambda\left(\ln \frac{t_1}{s}\right)^{\alpha}\big)\bigg)
\bigg[\frac{1}{\Gamma(\beta)}\int_{1}^{s}\left(\ln \frac{s}{\tau}\right)^{\beta-1}f(\tau,x(\tau))\frac{d\tau}{\tau}\\
&-&\frac{(\ln s)^{\beta-1}}{\Gamma(\beta)}\int_{1}^{e}\left(1-\ln\tau\right)^{\beta-1}f(\tau,x(\tau))\frac{d\tau}{\tau}\bigg]\frac{ds}{s}\bigg|\\
&+&\bigg|\int_{1}^{t_2}\mathbb{E}_{\alpha,\alpha}\big(\lambda\left(\ln \frac{t_1}{s}\right)^{\alpha}\bigg(\left(\ln \frac{t_2}{s}\right)^{\alpha-1}-\left(\ln \frac{t_1}{s}\right)^{\alpha-1}\bigg)
\bigg[\frac{1}{\Gamma(\beta)}\int_{1}^{s}\left(\ln \frac{s}{\tau}\right)^{\beta-1}f(\tau,x(\tau))\frac{d\tau}{\tau}\\
&-&\frac{(\ln s)^{\beta-1}}{\Gamma(\beta)}\int_{1}^{e}\left(1-\ln\tau\right)^{\beta-1}f(\tau,x(\tau))\frac{d\tau}{\tau}\bigg]\frac{ds}{s}\bigg|\\
&+&\bigg|\int_{t_1}^{t_2}\left(\ln \frac{t_1}{s}\right)^{\alpha-1}\mathbb{E}_{\alpha,\alpha}\big(\lambda\left(\ln \frac{t_1}{s}\right)^{\alpha}\big)\bigg[\frac{1}{\Gamma(\beta)}\int_{1}^{s}\left(\ln \frac{s}{\tau}\right)^{\beta-1}f(\tau,x(\tau))\frac{d\tau}{\tau}\\
&-&\frac{(\ln s)^{\beta-1}}{\Gamma(\beta)}\int_{1}^{e}\left(1-\ln\tau\right)^{\beta-1}f(\tau,x(\tau))\frac{d\tau}{\tau}\bigg]\frac{ds}{s}\bigg|
\end{eqnarray*}
\begin{eqnarray*}
&\leq&|c_0|\mathbb{E}_{\alpha,\alpha}(\lambda)\bigg((\alpha-1)\frac{(\ln \zeta)^{\alpha-2}}{\zeta}|t_2-t_1|\bigg)+|c_0|\bigg(\mathcal{O}|t_2-t_1|\bigg)\\
&+&\int_{1}^{t_2}\left(\ln \frac{t_2}{s}\right)^{\alpha-1}\bigg(\mathbb{E}_{\alpha,\alpha}\big(\lambda\left(\ln \frac{t_2}{s}\right)^{\alpha}\big)-\mathbb{E}_{\alpha,\alpha}\big(\lambda\left(\ln \frac{t_1}{s}\right)^{\alpha}\big)\bigg)
\bigg[\frac{1}{\Gamma(\beta)}\int_{1}^{s}\left(\ln \frac{s}{\tau}\right)^{\beta-1}|f(\tau,x(\tau))|\frac{d\tau}{\tau}\\
&+&\frac{(\ln s)^{\beta-1}}{\Gamma(\beta)}\int_{1}^{e}\left(1-\ln\tau\right)^{\beta-1}|f(\tau,x(\tau))|\frac{d\tau}{\tau}\bigg]\frac{ds}{s}\\
&+&\int_{1}^{t_2}\mathbb{E}_{\alpha,\alpha}\big(\lambda\left(\ln \frac{t_1}{s}\right)^{\alpha}\bigg(\left(\ln \frac{t_2}{s}\right)^{\alpha-1}-\left(\ln \frac{t_1}{s}\right)^{\alpha-1}\bigg)
\bigg[\frac{1}{\Gamma(\beta)}\int_{1}^{s}\left(\ln \frac{s}{\tau}\right)^{\beta-1}|f(\tau,x(\tau))|\frac{d\tau}{\tau}\\
&+&\frac{(\ln s)^{\beta-1}}{\Gamma(\beta)}\int_{1}^{e}\left(1-\ln\tau\right)^{\beta-1}|f(\tau,x(\tau))|\frac{d\tau}{\tau}\bigg]\frac{ds}{s}\\
&+&\int_{t_1}^{t_2}\left(\ln \frac{t_1}{s}\right)^{\alpha-1}\mathbb{E}_{\alpha,\alpha}\big(\lambda\left(\ln \frac{t_1}{s}\right)^{\alpha}\big)\bigg[\frac{1}{\Gamma(\beta)}\int_{1}^{s}\left(\ln \frac{s}{\tau}\right)^{\beta-1}|f(\tau,x(\tau))|\frac{d\tau}{\tau}\\
&+&\frac{(\ln s)^{\beta-1}}{\Gamma(\beta)}\int_{1}^{e}\left(1-\ln\tau\right)^{\beta-1}|f(\tau,x(\tau))|\frac{d\tau}{\tau}\bigg]\frac{ds}{s}\\
&\leq&|c_0|\mathbb{E}_{\alpha,\alpha}(\lambda)\bigg((\alpha-1)\frac{(\ln \zeta)^{\alpha-2}}{\zeta}|t_2-t_1|\bigg)+|c_0|\bigg(\mathcal{O}|t_2-t_1|\bigg)\\
&+&\int_{1}^{t_2}\left(\ln \frac{t_2}{s}\right)^{\alpha-1}\bigg(\mathbb{E}_{\alpha,\alpha}\big(\lambda\left(\ln \frac{t_2}{s}\right)^{\alpha}\big)-\mathbb{E}_{\alpha,\alpha}\big(\lambda\left(\ln \frac{t_1}{s}\right)^{\alpha}\big)\bigg)
\bigg[\frac{1}{\Gamma(\beta)}\int_{1}^{s}\left(\ln \frac{s}{\tau}\right)^{\beta-1}(L_1 |x(\tau)|+L_2)\frac{d\tau}{\tau}\\
&+&\frac{(\ln s)^{\beta-1}}{\Gamma(\beta)}\int_{1}^{e}\left(1-\ln\tau\right)^{\beta-1}(L_1 |x(\tau)|+L_2)\frac{d\tau}{\tau}\bigg]\frac{ds}{s}\\
&+&\int_{1}^{t_2}\mathbb{E}_{\alpha,\alpha}\big(\lambda\left(\ln \frac{t_1}{s}\right)^{\alpha}\bigg(\left(\ln \frac{t_2}{s}\right)^{\alpha-1}-\left(\ln \frac{t_1}{s}\right)^{\alpha-1}\bigg)
\bigg[\frac{1}{\Gamma(\beta)}\int_{1}^{s}\left(\ln \frac{s}{\tau}\right)^{\beta-1}(L_1 |x(\tau)|+L_2)\frac{d\tau}{\tau}\\
&+&\frac{(\ln s)^{\beta-1}}{\Gamma(\beta)}\int_{1}^{e}\left(1-\ln\tau\right)^{\beta-1}(L_1 |x(\tau)|+L_2)\frac{d\tau}{\tau}\bigg]\frac{ds}{s}\\
&+&\int_{t_1}^{t_2}\left(\ln \frac{t_1}{s}\right)^{\alpha-1}\mathbb{E}_{\alpha,\alpha}\big(\lambda\left(\ln \frac{t_1}{s}\right)^{\alpha}\big)\bigg[\frac{1}{\Gamma(\beta)}\int_{1}^{s}\left(\ln \frac{s}{\tau}\right)^{\beta-1}(L_1 |x(\tau)|+L_2)\frac{d\tau}{\tau}\\
&+&\frac{(\ln s)^{\beta-1}}{\Gamma(\beta)}\int_{1}^{e}\left(1-\ln\tau\right)^{\beta-1}(L_1 |x(\tau)|+L_2)\frac{d\tau}{\tau}\bigg]\frac{ds}{s}
\end{eqnarray*}
\begin{eqnarray*}
&\leq&|c_0|\mathbb{E}_{\alpha,\alpha}(\lambda)\bigg((\alpha-1)\frac{(\ln \zeta)^{\alpha-2}}{\zeta}|t_2-t_1|\bigg)+|c_0|\bigg(\mathcal{O}|t_2-t_1|\bigg)\\
&+&\int_{1}^{t_2}\left(\ln \frac{t_2}{s}\right)^{\alpha-1}\bigg(\mathbb{E}_{\alpha,\alpha}\big(\lambda\left(\ln \frac{t_2}{s}\right)^{\alpha}\big)-\mathbb{E}_{\alpha,\alpha}\big(\lambda\left(\ln \frac{t_1}{s}\right)^{\alpha}\big)\bigg)\\
&&\times\bigg[\frac{L_1}{\Gamma(\beta)}\int_{1}^{s}\left(\ln \frac{s}{\tau}\right)^{\beta-1}(\ln \tau)^{-\gamma}\|x\|_{\gamma,\ln}\frac{d\tau}{\tau}+\frac{L_2}{\Gamma(\beta)}\int_{1}^{s}\left(\ln \frac{s}{\tau}\right)^{\beta-1}\frac{d\tau}{\tau}\\
&&+\frac{L_1(\ln s)^{\beta-1}}{\Gamma(\beta)}\int_{1}^{e}\left(1-\ln\tau\right)^{\beta-1}(\ln \tau)^{-\gamma}\|x\|_{\gamma,\ln}\frac{d\tau}{\tau}+\frac{L_2(\ln s)^{\beta-1}}{\Gamma(\beta)}\int_{1}^{e}\left(1-\ln\tau\right)^{\beta-1}\frac{d\tau}{\tau}\bigg]\frac{ds}{s}\\
&+&\int_{1}^{t_2}\mathbb{E}_{\alpha,\alpha}\big(\lambda\left(\ln \frac{t_1}{s}\right)^{\alpha}\bigg(\left(\ln \frac{t_2}{s}\right)^{\alpha-1}-\left(\ln \frac{t_1}{s}\right)^{\alpha-1}\bigg)\bigg[\frac{L_1}{\Gamma(\beta)}\int_{1}^{s}\left(\ln \frac{s}{\tau}\right)^{\beta-1}(\ln \tau)^{-\gamma}\|x\|_{\gamma,\ln}\frac{d\tau}{\tau}+\frac{L_2}{\Gamma(\beta)}\int_{1}^{s}\left(\ln \frac{s}{\tau}\right)^{\beta-1}\frac{d\tau}{\tau}\\
&+&\frac{L_1(\ln s)^{\beta-1}}{\Gamma(\beta)}\int_{1}^{e}\left(1-\ln\tau\right)^{\beta-1}(\ln \tau)^{-\gamma}\|x\|_{\gamma,\ln}\frac{d\tau}{\tau}+\frac{L_2(\ln s)^{\beta-1}}{\Gamma(\beta)}\int_{1}^{e}\left(1-\ln\tau\right)^{\beta-1}\frac{d\tau}{\tau}\bigg]\frac{ds}{s}\\
&+&\int_{t_1}^{t_2}\left(\ln \frac{t_1}{s}\right)^{\alpha-1}\mathbb{E}_{\alpha,\alpha}\big(\lambda\left(\ln \frac{t_1}{s}\right)^{\alpha}\big)
\bigg[\frac{L_1}{\Gamma(\beta)}\int_{1}^{s}\left(\ln \frac{s}{\tau}\right)^{\beta-1}(\ln \tau)^{-\gamma}\|x\|_{\gamma,\ln}\frac{d\tau}{\tau}+\frac{L_2}{\Gamma(\beta)}\int_{1}^{s}\left(\ln \frac{s}{\tau}\right)^{\beta-1}\frac{d\tau}{\tau}\\
&+&\frac{L_1(\ln s)^{\beta-1}}{\Gamma(\beta)}\int_{1}^{e}\left(1-\ln\tau\right)^{\beta-1}(\ln \tau)^{-\gamma}\|x\|_{\gamma,\ln}\frac{d\tau}{\tau}+\frac{L_2(\ln s)^{\beta-1}}{\Gamma(\beta)}\int_{1}^{e}\left(1-\ln\tau\right)^{\beta-1}\frac{d\tau}{\tau}\bigg]\frac{ds}{s}\\
&=&|c_0|\mathbb{E}_{\alpha,\alpha}(\lambda)\bigg((\alpha-1)\frac{(\ln \zeta)^{\alpha-2}}{\zeta}|t_2-t_1|\bigg)+|c_0|\bigg(\mathcal{O}|t_2-t_1|\bigg)\\
&+&\int_{1}^{t_2}\left(\ln \frac{t_2}{s}\right)^{\alpha-1}\bigg(\mathbb{E}_{\alpha,\alpha}\big(\lambda\left(\ln \frac{t_2}{s}\right)^{\alpha}\big)-\mathbb{E}_{\alpha,\alpha}\big(\lambda\left(\ln \frac{t_1}{s}\right)^{\alpha}\big)\bigg)\bigg[\frac{L_1 r}{\Gamma(\beta)}(\ln s)^{\beta-\gamma}\mathbb{B}(\beta,1-\gamma)+\frac{L_2}{\Gamma(\beta+1)}(\ln s)^{\beta}\\
&+&\left(\frac{L_1 r}{\Gamma(\beta)}\mathbb{B}(\beta,1-\gamma)+\frac{L_2}{\Gamma(\beta+1)}\right)(\ln s)^{\beta-1}\bigg]\frac{ds}{s}\\
&+&\int_{1}^{t_2}\mathbb{E}_{\alpha,\alpha}\big(\lambda\left(\ln \frac{t_1}{s}\right)^{\alpha}\bigg(\left(\ln \frac{t_2}{s}\right)^{\alpha-1}-\left(\ln \frac{t_1}{s}\right)^{\alpha-1}\bigg)
\bigg[\frac{L_1 r}{\Gamma(\beta)}(\ln s)^{\beta-\gamma}\mathbb{B}(\beta,1-\gamma)+\frac{L_2}{\Gamma(\beta+1)}(\ln s)^{\beta}\\
&+&\left(\frac{L_1 r}{\Gamma(\beta)}\mathbb{B}(\beta,1-\gamma)+\frac{L_2}{\Gamma(\beta+1)}\right)(\ln s)^{\beta-1}\bigg]\frac{ds}{s}\\
&+&\int_{t_1}^{t_2}\left(\ln \frac{t_1}{s}\right)^{\alpha-1}\mathbb{E}_{\alpha,\alpha}\big(\lambda\left(\ln \frac{t_1}{s}\right)^{\alpha}\big)\bigg[\frac{L_1 r}{\Gamma(\beta)}(\ln s)^{\beta-\gamma}\mathbb{B}(\beta,1-\gamma)+\frac{L_2}{\Gamma(\beta+1)}(\ln s)^{\beta}\\
&+&\left(\frac{L_1 r}{\Gamma(\beta)}\mathbb{B}(\beta,1-\gamma)+\frac{L_2}{\Gamma(\beta+1)}\right)(\ln s)^{\beta-1}\bigg]\frac{ds}{s}\\
&=&|c_0|\mathbb{E}_{\alpha,\alpha}(\lambda)\bigg((\alpha-1)\frac{(\ln \zeta)^{\alpha-2}}{\zeta}|t_2-t_1|\bigg)+|c_0|\bigg(\mathcal{O}|t_2-t_1|\bigg)+I_1+I_2+I_3,
\end{eqnarray*}
where $\zeta\in (t_1,t_2)$ and
$$I_1=\int_{1}^{t_2}\left(\ln \frac{t_2}{s}\right)^{\alpha-1}\bigg(\mathbb{E}_{\alpha,\alpha}\big(\lambda\left(\ln \frac{t_2}{s}\right)^{\alpha}\big)-\mathbb{E}_{\alpha,\alpha}\big(\lambda\left(\ln \frac{t_1}{s}\right)^{\alpha}\big)\bigg)\bigg[\frac{L_1 r}{\Gamma(\beta)}(\ln s)^{\beta-\gamma}\mathbb{B}(\beta,1-\gamma)$$
$$+\frac{L_2}{\Gamma(\beta+1)}(\ln s)^{\beta}+\left(\frac{L_1 r}{\Gamma(\beta)}\mathbb{B}(\beta,1-\gamma)+\frac{L_2}{\Gamma(\beta+1)}\right)(\ln s)^{\beta-1}\bigg]\frac{ds}{s},$$

$$I_2=\int_{1}^{t_2}\mathbb{E}_{\alpha,\alpha}\big(\lambda\left(\ln \frac{t_1}{s}\right)^{\alpha}\bigg(\left(\ln \frac{t_2}{s}\right)^{\alpha-1}-\left(\ln \frac{t_1}{s}\right)^{\alpha-1}\bigg)
\bigg[\frac{L_1 r}{\Gamma(\beta)}(\ln s)^{\beta-\gamma}\mathbb{B}(\beta,1-\gamma)+\frac{L_2}{\Gamma(\beta+1)}(\ln s)^{\beta}$$
$$+\left(\frac{L_1 r}{\Gamma(\beta)}\mathbb{B}(\beta,1-\gamma)+\frac{L_2}{\Gamma(\beta+1)}\right)(\ln s)^{\beta-1}\bigg]\frac{ds}{s},$$

$$I_3=\int_{t_1}^{t_2}\left(\ln \frac{t_1}{s}\right)^{\alpha-1}\mathbb{E}_{\alpha,\alpha}\big(\lambda\left(\ln \frac{t_1}{s}\right)^{\alpha}\big)\bigg[\frac{L_1 r}{\Gamma(\beta)}(\ln s)^{\beta-\gamma}\mathbb{B}(\beta,1-\gamma)+\frac{L_2}{\Gamma(\beta+1)}(\ln s)^{\beta}$$
$$+\left(\frac{L_1 r}{\Gamma(\beta)}\mathbb{B}(\beta,1-\gamma)+\frac{L_2}{\Gamma(\beta+1)}\right)(\ln s)^{\beta-1}\bigg]\frac{ds}{s}.$$
For $\frac{1}{p}+\frac{1}{q}=1,~p,q>1$, we have the following estimations.
\begin{eqnarray*}
I_1&=&\int_{1}^{t_2}\left(\ln \frac{t_2}{s}\right)^{\alpha-1}\bigg(\mathbb{E}_{\alpha,\alpha}\big(\lambda\left(\ln \frac{t_2}{s}\right)^{\alpha}\big)-\mathbb{E}_{\alpha,\alpha}\big(\lambda\left(\ln \frac{t_1}{s}\right)^{\alpha}\big)\bigg)\bigg[\frac{L_1 r}{\Gamma(\beta)}(\ln s)^{\beta-\gamma}\mathbb{B}(\beta,1-\gamma)\\
&+&\frac{L_2}{\Gamma(\beta+1)}(\ln s)^{\beta}+\left(\frac{L_1 r}{\Gamma(\beta)}\mathbb{B}(\beta,1-\gamma)+\frac{L_2}{\Gamma(\beta+1)}\right)(\ln s)^{\beta-1}\bigg]\frac{ds}{s}\\
&=&\frac{L_1 r}{\Gamma(\beta)}\mathbb{B}(\beta,1-\gamma)\bigg[\int_{1}^{t_2}\left(\ln \frac{t_2}{s}\right)^{\alpha-1}(\ln s)^{\beta-\gamma}\bigg(\mathbb{E}_{\alpha,\alpha}\big(\lambda\left(\ln \frac{t_2}{s}\right)^{\alpha}\big)-\mathbb{E}_{\alpha,\alpha}\big(\lambda\left(\ln \frac{t_1}{s}\right)^{\alpha}\big)\bigg)\frac{ds}{s}\\
&+&\int_{1}^{t_2}\left(\ln \frac{t_2}{s}\right)^{\alpha-1}(\ln s)^{\beta-1}\bigg(\mathbb{E}_{\alpha,\alpha}\big(\lambda\left(\ln \frac{t_2}{s}\right)^{\alpha}\big)-\mathbb{E}_{\alpha,\alpha}\big(\lambda\left(\ln \frac{t_1}{s}\right)^{\alpha}\big)\bigg)\frac{ds}{s}\bigg]\\
&+&\frac{L_2}{\Gamma(\beta+1)}\bigg[\int_{1}^{t_2}\left(\ln \frac{t_2}{s}\right)^{\alpha-1}(\ln s)^{\beta}\bigg(\mathbb{E}_{\alpha,\alpha}\big(\lambda\left(\ln \frac{t_2}{s}\right)^{\alpha}\big)-\mathbb{E}_{\alpha,\alpha}\big(\lambda\left(\ln \frac{t_1}{s}\right)^{\alpha}\big)\bigg)\frac{ds}{s}\\
&+&\int_{1}^{t_2}\left(\ln \frac{t_2}{s}\right)^{\alpha-1}(\ln s)^{\beta-1}\bigg(\mathbb{E}_{\alpha,\alpha}\big(\lambda\left(\ln \frac{t_2}{s}\right)^{\alpha}\big)-\mathbb{E}_{\alpha,\alpha}\big(\lambda\left(\ln \frac{t_1}{s}\right)^{\alpha}\big)\bigg)\frac{ds}{s}\bigg]\\
&\leq&\frac{L_1 r}{\Gamma(\beta)}\mathbb{B}(\beta,1-\gamma)\bigg[\bigg(\int_{1}^{t_2}\left(\ln \frac{t_2}{s}\right)^{p(\alpha-1)}(\ln s)^{p(\beta-\gamma)}\frac{ds}{s}\bigg)^{\frac{1}{p}}\bigg(\int_{1}^{t_2}\bigg(\mathbb{E}_{\alpha,\alpha}\big(\lambda\left(\ln \frac{t_2}{s}\right)^{\alpha}\big)-\mathbb{E}_{\alpha,\alpha}\big(\lambda\left(\ln \frac{t_1}{s}\right)^{\alpha}\big)\bigg)^{q}\frac{ds}{s}\bigg)^{\frac{1}{q}}\\
&+&\bigg(\int_{1}^{t_2}\left(\ln \frac{t_2}{s}\right)^{p(\alpha-1)}(\ln s)^{p(\beta-1)}\frac{ds}{s}\bigg)^{\frac{1}{p}}\bigg(\int_{1}^{t_2}\bigg(\mathbb{E}_{\alpha,\alpha}\big(\lambda\left(\ln \frac{t_2}{s}\right)^{\alpha}\big)-\mathbb{E}_{\alpha,\alpha}\big(\lambda\left(\ln \frac{t_1}{s}\right)^{\alpha}\big)\bigg)^{q}\frac{ds}{s}\bigg)^{\frac{1}{q}}\bigg]\\
&+&\frac{L_2}{\Gamma(\beta+1)}\bigg[\bigg(\int_{1}^{t_2}\left(\ln \frac{t_2}{s}\right)^{p(\alpha-1)}(\ln s)^{p\beta}\frac{ds}{s}\bigg)^{\frac{1}{p}}\bigg(\int_{1}^{t_2}\bigg(\mathbb{E}_{\alpha,\alpha}\big(\lambda\left(\ln \frac{t_2}{s}\right)^{\alpha}\big)-\mathbb{E}_{\alpha,\alpha}\big(\lambda\left(\ln \frac{t_1}{s}\right)^{\alpha}\big)\bigg)^{q}\frac{ds}{s}\bigg)^{\frac{1}{q}}\\
&+&\bigg(\int_{1}^{t_2}\left(\ln \frac{t_2}{s}\right)^{p(\alpha-1)}(\ln s)^{p(\beta-1)}\frac{ds}{s}\bigg)^{\frac{1}{p}}\bigg(\int_{1}^{t_2}\bigg(\mathbb{E}_{\alpha,\alpha}\big(\lambda\left(\ln \frac{t_2}{s}\right)^{\alpha}\big)-\mathbb{E}_{\alpha,\alpha}\big(\lambda\left(\ln \frac{t_1}{s}\right)^{\alpha}\big)\bigg)^{q}\frac{ds}{s}\bigg)^{\frac{1}{q}}\bigg]
\end{eqnarray*}
\begin{eqnarray*}
&\leq&\frac{L_1 r}{\Gamma(\beta)}\mathbb{B}(\beta,1-\gamma)\bigg[\bigg(\mathbb{B}(p(\alpha-1)+1,p(\beta-\gamma)+1)\bigg)^{\frac{1}{p}}\bigg(\int_{1}^{t_2}\mathcal{O}(|t_2-t_1|)\frac{ds}{s}\bigg)^{\frac{1}{q}}\\
&+&\bigg(\mathbb{B}(p(\alpha-1)+1,p(\beta-1)+1)\bigg)^{\frac{1}{p}}\bigg(\int_{1}^{t_2}\mathcal{O}(|t_2-t_1|)\frac{ds}{s}\bigg)^{\frac{1}{q}}\bigg]\\
&+&\frac{L_2}{\Gamma(\beta+1)}\bigg[\bigg(\mathbb{B}(p(\alpha-1)+1,p\beta+1)\bigg)^{\frac{1}{p}}\bigg(\int_{1}^{t_2}\mathcal{O}(|t_2-t_1|)\frac{ds}{s}\bigg)^{\frac{1}{q}}\\
&&+\bigg(\mathbb{B}(p(\alpha-1)+1,p(\beta-1)+1)\bigg)^{\frac{1}{p}}\bigg(\int_{1}^{t_2}\mathcal{O}(|t_2-t_1|)\frac{ds}{s}\bigg)^{\frac{1}{q}}\bigg],
\end{eqnarray*}
which implies that $I_1\to 0$ as $t_1\to t_2$.\\

For $I_2$, using Lemma \ref{Ronglemma}, we have
\begin{eqnarray*}
I_2&\leq&\frac{L_1 r}{\Gamma(\beta)}\mathbb{B}(\beta,1-\gamma)\mathbb{E}_{\alpha,\alpha}(\lambda)\bigg[\bigg(\int_{1}^{t_2}(\ln s)^{p(\beta-\gamma)}\frac{ds}{s}\bigg)^{\frac{1}{p}}\bigg(\int_{1}^{t_2}\bigg(\left(\ln \frac{t_2}{s}\right)^{\alpha-1}-\left(\ln \frac{t_1}{s}\right)^{\alpha-1}\bigg)^{q}\frac{ds}{s}\bigg)^{\frac{1}{q}}\\
&+&\bigg(\int_{1}^{t_2}(\ln s)^{p(\beta-1)}\frac{ds}{s}\bigg)^{\frac{1}{p}}\bigg(\int_{1}^{t_2}\bigg(\left(\ln \frac{t_2}{s}\right)^{\alpha-1}-\left(\ln \frac{t_1}{s}\right)^{\alpha-1}\bigg)^{q}\frac{ds}{s}\bigg)^{\frac{1}{q}}\bigg]\\
&+&\frac{L_2}{\Gamma(\beta+1)}\mathbb{E}_{\alpha,\alpha}(\lambda)\bigg[\bigg(\int_{1}^{t_2}(\ln s)^{p\beta}\frac{ds}{s}\bigg)^{\frac{1}{p}}\bigg(\int_{1}^{t_2}\bigg(\left(\ln \frac{t_2}{s}\right)^{\alpha-1}-\left(\ln \frac{t_1}{s}\right)^{\alpha-1}\bigg)^{q}\frac{ds}{s}\bigg)^{\frac{1}{q}}\\
&+&\bigg(\int_{1}^{t_2}(\ln s)^{p(\beta-1)}\frac{ds}{s}\bigg)^{\frac{1}{p}}\bigg(\int_{1}^{t_2}\bigg(\left(\ln \frac{t_2}{s}\right)^{\alpha-1}-\left(\ln \frac{t_1}{s}\right)^{\alpha-1}\bigg)^{q}\frac{ds}{s}\bigg)^{\frac{1}{q}}\bigg]\\
&\leq&\Bigg\{\frac{L_1 r}{\Gamma(\beta)}\mathbb{B}(\beta,1-\gamma)\mathbb{E}_{\alpha,\alpha}(\lambda)\bigg[\bigg(\frac{(\ln t_2)^{p(\beta-\gamma)+1}}{p(\beta-\gamma)+1}\bigg)^{\frac{1}{p}}+\bigg(\frac{(\ln t_2)^{p(\beta-1)+1}}{p(\beta-1)+1}\bigg)^{\frac{1}{p}}\bigg]\\
&&+\frac{L_2}{\Gamma(\beta+1)}\mathbb{E}_{\alpha,\alpha}(\lambda)\bigg[\bigg(\frac{(\ln t_2)^{p\beta+1}}{p\beta+1}\bigg)^{\frac{1}{p}}+\bigg(\frac{(\ln t_2)^{p(\beta-1)+1}}{p(\beta-1)+1}\bigg)^{\frac{1}{p}}\bigg]\Bigg\}\\
&&\times\bigg(\frac{1}{q(\alpha-1)+1}\bigg((\ln t_2)^{q(\alpha-1)+1}-(\ln t_1)^{q(\alpha-1)+1}+\left(\ln \frac{t_1}{t_2}\right)^{q(\alpha-1)+1}\bigg)\bigg),
\end{eqnarray*}
which implies that $I_2\to 0$ as $t_1\to t_2$.\\

For $I_3$, using Lemma \ref{MVTh}, we have
\begin{eqnarray*}
I_3&=&\frac{L_1 r}{\Gamma(\beta)}\mathbb{B}(\beta,1-\gamma)\mathbb{E}_{\alpha,\alpha}(\lambda)\bigg[\left(\ln \frac{t_1}{\xi}\right)^{\alpha-1}\frac{(\ln \xi)^{\beta-\gamma}}{\xi}|t_2-t_1|+\left(\ln \frac{t_1}{\xi}\right)^{\alpha-1}\frac{(\ln \xi)^{\beta-1}}{\xi}|t_2-t_1|\bigg]\\
&+&\frac{L_2}{\Gamma(\beta+1)}\mathbb{E}_{\alpha,\alpha}(\lambda)\bigg[\left(\ln \frac{t_1}{\xi}\right)^{\alpha-1}\frac{(\ln \xi)^{\beta}}{\xi}|t_2-t_1|+\left(\ln \frac{t_1}{\xi}\right)^{\alpha-1}\frac{(\ln \xi)^{\beta-1}}{\xi}|t_2-t_1|\bigg],
\end{eqnarray*}
where $\xi\in (t_1,t_2)$. Then $I_3\to 0$ as $t_1\to t_2$.\\

Thus we get $\big|(\mathcal{H}x)(t_2)-(\mathcal{H}x)(t_1)\big|\to 0$ as $t_1\to t_2$ which proves that the operator
$\mathcal{H}$ is equicontinuous operator. Hence, by Arzel\`{a}-Ascoli theorem, we conclude that $\mathcal{H}$  is relatively compact on $B_r$. Therefore,  the Schauder fixed point theorem shows that the operator $\mathcal{H}$ has a fixed point which corresponds to the solution of (\ref{main}). This completes the proof.
\end{proof}
Next, we shall show that the following existence and uniqueness result via Banach fixed point theorem.
\begin{theorem}\label{th:uniqueness}
Assume that:
\begin{itemize}
  \item []\textbf{(H2)} Let $f:[1,e]\times\mathbb{R}\to\mathbb{R}$ be a continuous function. There exists a constant $L>0$ such that $|f(t,x)-f(t,y)|\leq L|x-y|$, for each $t\in[1,e]$ and $x,y\in\mathbb{R}$.
  \end{itemize}
Then (\ref{main}) has a unique solution on $[1,e]$,  provided that $L\omega_2<1$.
\end{theorem}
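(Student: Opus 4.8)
The plan is to recast the boundary value problem (\ref{main}) as a fixed point equation for the operator $\mathcal{H}$ defined in (\ref{operator}) and to show that, under \textbf{(H2)} together with the hypothesis $L\omega_2<1$, this operator is a contraction on a complete metric space; the conclusion then follows immediately from the Banach fixed point theorem. By Lemma \ref{lem:aux}, applied with $\sigma(t)=f(t,x(t))$, a function $x\in C_{\gamma,\ln}([1,e],\mathbb{R})$ solves (\ref{main}) if and only if $x$ is a fixed point of $\mathcal{H}$, so it suffices to produce a unique such fixed point.

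First I would fix the ambient space. Since $C_{\gamma,\ln}([1,e],\mathbb{R})$ is a Banach space and $B_r$ is a closed, hence complete, subset of it, one may carry out the argument either on the whole space or on $B_r$; the contraction estimate is insensitive to this choice, so I would work on $C_{\gamma,\ln}([1,e],\mathbb{R})$ directly. The heart of the argument is the contraction estimate, and this is where essentially all the work lies, although almost all of it has already been carried out. Indeed, the computation performed in \textbf{Step 2} of the proof of Theorem \ref{th:existence} bounds $\|\mathcal{H}x-\mathcal{H}y\|_{\gamma,\ln}$ by $\omega_2\,\|f(\cdot,x(\cdot))-f(\cdot,y(\cdot))\|_{\gamma,\ln}$, the constant $\omega_2$ arising from the integral identities of Lemma \ref{lem:integrals} and the monotone bound $\mathbb{E}_{\alpha,\alpha}\!\left(\lambda(\ln\tfrac{t}{s})^{\alpha}\right)\leq \mathbb{E}_{\alpha,\alpha}(\lambda)$.

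The new ingredient is the Lipschitz hypothesis \textbf{(H2)}: for any $x,y$ and any $t\in[1,e]$ one has $(\ln t)^{\gamma}|f(t,x(t))-f(t,y(t))|\leq L\,(\ln t)^{\gamma}|x(t)-y(t)|$, and taking the supremum over $t$ gives $\|f(\cdot,x(\cdot))-f(\cdot,y(\cdot))\|_{\gamma,\ln}\leq L\,\|x-y\|_{\gamma,\ln}$. Combining this with the estimate recalled above yields
\begin{equation*}
\|\mathcal{H}x-\mathcal{H}y\|_{\gamma,\ln}\leq L\omega_2\,\|x-y\|_{\gamma,\ln}.
\end{equation*}

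Finally, since the hypothesis guarantees $L\omega_2<1$, the operator $\mathcal{H}$ is a contraction, and the Banach fixed point theorem furnishes a unique fixed point $x^{\ast}\in C_{\gamma,\ln}([1,e],\mathbb{R})$, which by the equivalence established in Lemma \ref{lem:aux} is precisely the unique solution of (\ref{main}) on $[1,e]$. I do not anticipate any genuine obstacle here: the only point requiring care is that the weighted norm should interact cleanly with the pointwise Lipschitz bound, which it does because the weight $(\ln t)^{\gamma}$ factors through unchanged, and that the Mittag-Leffler factor is controlled by the same monotone bound already exploited in Theorem \ref{th:existence}.
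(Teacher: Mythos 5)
Your proof is correct and follows essentially the same route as the paper: both establish the contraction estimate $\|\mathcal{H}x-\mathcal{H}y\|_{\gamma,\ln}\leq L\omega_2\|x-y\|_{\gamma,\ln}$ (via the Lipschitz bound, the monotone Mittag-Leffler estimate, and the beta-function integrals of Lemma \ref{lem:integrals}) and then invoke the Banach fixed point theorem. If anything, your choice to work on all of $C_{\gamma,\ln}([1,e],\mathbb{R})$ is tidier than the paper's restriction to a ball $B_k$, whose invariance under $\mathcal{H}$ quietly relies on Step 1 of Theorem \ref{th:existence} and hence on hypothesis (H1), which is not assumed in this theorem.
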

\begin{proof}
As previously proven in \textbf{Step 1} of Theorem \ref{th:existence}, the operator $\mathcal{H}:B_k\to B_k$ defined in (\ref{operator}) is uniformly bounded. Then it remains to show that $\mathcal{H}$ is a contraction mapping.\\

For $x,y\in B_k$, where $B_k=\{x\in C_{\gamma,\ln}([1,e],\mathbb{R}):\|x\|_{\gamma,\ln}\leq k\}$, we have\\

$\left|(\ln t)^{\gamma}((\mathcal{H}x)(t)-(\mathcal{H}y)(t))\right|$
\begin{eqnarray*}
&=&\bigg|(\ln t)^{\gamma}\int_{1}^{t}\left(\ln \frac{t}{s}\right)^{\alpha-1}\mathbb{E}_{\alpha,\alpha}\left(\lambda\left(\ln \frac{t}{s}\right)^{\alpha}\right)
\bigg[\frac{1}{\Gamma(\beta)}\int_{1}^{s}\left(\ln \frac{s}{\tau}\right)^{\beta-1}\big(f(\tau,x(\tau))-f(\tau,y(\tau))\big)\frac{d\tau}{\tau}\\
&-&\frac{(\ln s)^{\beta-1}}{\Gamma(\beta)}\int_{1}^{e}\left(1-\ln\tau\right)^{\beta-1}\big(f(\tau,x(\tau))-f(\tau,y(\tau))\big)\frac{d\tau}{\tau}\bigg]\frac{ds}{s}\bigg|\\
&\leq&\int_{1}^{t}\left(\ln \frac{t}{s}\right)^{\alpha-1}\mathbb{E}_{\alpha,\alpha}\left(\lambda\left(\ln \frac{t}{s}\right)^{\alpha}\right)
\bigg[\frac{1}{\Gamma(\beta)}\int_{1}^{s}\left(\ln \frac{s}{\tau}\right)^{\beta-1}\big|(f(\tau,x(\tau))-f(\tau,y(\tau))\big|\frac{d\tau}{\tau}\\
&+&\frac{(\ln s)^{\beta-1}}{\Gamma(\beta)}\int_{1}^{e}\left(1-\ln\tau\right)^{\beta-1}\big|(f(\tau,x(\tau))-f(\tau,y(\tau))\big|\frac{d\tau}{\tau}\bigg]\frac{ds}{s}\\
&\leq&\int_{1}^{t}\left(\ln \frac{t}{s}\right)^{\alpha-1}\mathbb{E}_{\alpha,\alpha}\left(\lambda\left(\ln \frac{t}{s}\right)^{\alpha}\right)
\bigg[\frac{L}{\Gamma(\beta)}\int_{1}^{s}\left(\ln \frac{s}{\tau}\right)^{\beta-1}|x(\tau)-y(\tau)|\frac{d\tau}{\tau}\\
&+&\frac{L (\ln s)^{\beta-1}}{\Gamma(\beta)}\int_{1}^{e}\left(1-\ln\tau\right)^{\beta-1}|x(\tau)-y(\tau)|\frac{d\tau}{\tau}\bigg]\frac{ds}{s}\\
&\leq&\frac{L}{\Gamma(\beta)}\Bigg\{\int_{1}^{t}\left(\ln \frac{t}{s}\right)^{\alpha-1}\mathbb{E}_{\alpha,\alpha}\left(\lambda\left(\ln \frac{t}{s}\right)^{\alpha}\right)
\bigg[\int_{1}^{s}\left(\ln \frac{s}{\tau}\right)^{\beta-1}(\ln \tau)^{-\gamma}\frac{d\tau}{\tau}\\
&+&(\ln s)^{\beta-1}\int_{1}^{e}\left(1-\ln\tau\right)^{\beta-1}(\ln \tau)^{-\gamma}\frac{d\tau}{\tau}\bigg]\frac{ds}{s}\Bigg\}\|x-y\|_{\gamma,\ln}\\
&\leq&\frac{L\mathbb{B}(\beta,1-\gamma)\mathbb{E}_{\alpha,\alpha}(\lambda)}{\Gamma(\beta)}\left\{\int_{1}^{t}\left(\ln \frac{t}{s}\right)^{\alpha-1}(\ln s)^{\beta-\gamma}\frac{ds}{s}+\int_{1}^{t}\left(\ln \frac{t}{s}\right)^{\alpha-1}(\ln s)^{\beta-1}\frac{ds}{s}\right\}\|x-y\|_{\gamma,\ln}\\
&\leq&\frac{L\mathbb{B}(\beta,1-\gamma)\mathbb{E}_{\alpha,\alpha}(\lambda)}{\Gamma(\beta)}
\bigg(\mathbb{B}(\alpha,1+\beta-\gamma)+\mathbb{B}(\alpha,\beta)\bigg)\|x-y\|_{\gamma,\ln},
\end{eqnarray*}
which implies that $\|\mathcal{H}x-\mathcal{H}y\|_{\gamma,\ln}\leq L\omega_2\|x-y\|_{\gamma,\ln}$. It follows that
$\mathcal{H}$ is a contraction. As a consequence of Banach fixed point theorem, the operator $\mathcal{H}$ has a fixed point which corresponds to the unique solution of (\ref{main}). This completes the proof.
\end{proof}

\section{An example}
Consider the following Langevin equation with two Hadamard fractional derivatives:
\begin{example}
\begin{equation}\label{ex}
\begin{cases}
\prescript{H}{}D_{1,t}^{\frac{3}{4}}(\prescript{H}{}D_{1,t}^{\frac{1}{2}}-1)x(t)=f(t,x(t)),~~t\in[1,e],\\
(\prescript{H}{}D_{1,t}^{\frac{1}{2}}-1) x(e)=0,~~~\prescript{H}{}I_{1^+}^{\frac{1}{2}}x(1^+)=1.
\end{cases}
\end{equation}
Here, $\alpha=\frac{1}{2},\beta=\frac{3}{4},\lambda=1,\gamma=\frac{1}{4}$ and $c_0=1.$ \\

In order to illustrate Theorem \ref{th:existence}, we take $f(t,x)=\sin\frac{1}{81}|x|+\frac{1}{(1+t)^2}$ for all $t\in[1,e]$.\\
Clearly, $|f(t,x)|\leq\frac{1}{81}|x|+\frac{1}{4}$. According to the assumption $(H1)$, we get $L_1=\frac{1}{81}$ and $L_2=\frac{1}{4}$.\\
Thus,
$$L_1\omega_2=\frac{\mathbb{B}\left(\frac{3}{4},\frac{3}{4}\right)}{81\Gamma\left(\frac{3}{4}\right)}\left[\mathbb{B}\left(\frac{1}{2},\frac{3}{2}\right)+\mathbb{B}\left(\frac{1}{2},\frac{3}{4}\right)\right]\mathbb{E}_{\frac{1}{2},\frac{1}{2}}(1)=0.06772116862~\mathbb{E}_{\frac{1}{2},\frac{1}{2}}(1).$$
Using Lemma \ref{Gorenfloetal}, we get
$$\mathbb{E}_{\frac{1}{2},\frac{1}{2}}(1)=\frac{1}{\Gamma\left(\frac{1}{2}\right)}+\mathbb{E}_{\frac{1}{2},1}(1)=\frac{1}{\sqrt{\pi}}+\textnormal{erfc(-1)}\times e=5.573170227.$$ Therefore, $L_1\omega_2=0.3774216007<1$, and according to  Theorem \ref{th:existence}, we conclude that

the Langevin equation (\ref{ex}) with $f(t,x)=\sin\frac{1}{81}|x|+\frac{1}{(1+t)^2}$ has at least one solution on $[1,e]$.\\

For the illustration of Theorem \ref{th:uniqueness}, let us take $f(t,x)=\frac{|x|}{(99+t^2)(1+|x|)}$ for all $t\in[1,e]$.
Obviously, $|f(t,x)-f(t,y)|\leq\frac{1}{100}|x-y|$. Thus,  the assumption $(H2)$ implies that  $L=\frac{1}{100}$.\\

The direct computations give
$$L\omega_2=\frac{\mathbb{B}\left(\frac{3}{4},\frac{3}{4}\right)}{100\Gamma\left(\frac{3}{4}\right)}\left[\mathbb{B}\left(\frac{1}{2},\frac{3}{2}\right)+\mathbb{B}\left(\frac{1}{2},\frac{3}{4}\right)\right]\mathbb{E}_{\frac{1}{2},\frac{1}{2}}(1)=0.05485414658~ \mathbb{E}_{\frac{1}{2},\frac{1}{2}}(1).$$
Hence $L\omega_2=0.3057114966<1.$ According to Theorem \ref{th:uniqueness}, the Langevin equation (\ref{ex}) with $f(t,x)=\frac{|x|}{(99+t^2)(1+|x|)}$ has a unique solution on $[1,e]$.
\end{example}

\end{document}